\documentclass[11pt]{article}
\usepackage{IEEEtrantools}
\usepackage{mathtools, nccmath}
\usepackage{latexsym}
\usepackage{amsfonts}
\usepackage{amssymb}
\usepackage{psfrag}
\usepackage{graphicx}
\usepackage{epsfig}
\usepackage{amsmath,amsfonts,amsthm}
\usepackage{mathrsfs}
\usepackage{enumerate}
\usepackage{dsfont}
\usepackage{hyperref}
\usepackage{pst-node}
\usepackage{cleveref}
\usepackage{enumitem}
\usepackage{tikz-cd}
\usepackage{multirow}
\usepackage{tikz}
\usetikzlibrary{arrows,decorations.markings}
\usepackage{bookmark}
\usepackage{hyperref}
\hypersetup{
     colorlinks   = true,
     citecolor    = blue
}

\newcommand\restr[2]{{
  \left.\kern-\nulldelimiterspace 
  #1 
  \littletaller 
  \right|_{#2} 
  }}

\newcommand{\littletaller}{\mathchoice{\vphantom{\big|}}{}{}{}}

\newcommand{\be}{\begin{equation}}
\newcommand{\ee}{\end{equation}}
\newcommand{\bea}{\begin{eqnarray}}
\newcommand{\eea}{\end{eqnarray}}
\newcommand{\bean}{\begin{eqnarray*}}
\newcommand{\eean}{\end{eqnarray*}}
\newcommand{\brray}{\begin{array}}
\newcommand{\erray}{\end{array}}
\newcommand{\biearray}{\begin{IEEEarray}{rCl}}
\newcommand{\eiearray}{\end{IEEEarray}}

\newcommand{\newsection}[1]{\setcounter{equation}{0}
\setcounter{dfn}{0}
\section{#1}}

\newtheorem{dfn}{Definition}[section]
\newtheorem{thm}[dfn]{Theorem}
\newtheorem{lmma}[dfn]{Lemma}
\newtheorem{ppsn}[dfn]{Proposition}
\newtheorem{crlre}[dfn]{Corollary}
\newtheorem{xmpl}[dfn]{Example}
\newtheorem{rmrk}[dfn]{Remark}

\newcommand{\bdfn}{\begin{dfn}\rm}
\newcommand{\bthm}{\begin{thm}}
\newcommand{\blmma}{\begin{lmma}}
\newcommand{\bppsn}{\begin{ppsn}}
\newcommand{\bcrlre}{\begin{crlre}}
\newcommand{\bxmpl}{\begin{xmpl}}
\newcommand{\brmrk}{\begin{rmrk}\rm}

\newcommand{\edfn}{\end{dfn}}
\newcommand{\ethm}{\end{thm}}
\newcommand{\elmma}{\end{lmma}}
\newcommand{\eppsn}{\end{ppsn}}
\newcommand{\ecrlre}{\end{crlre}}
\newcommand{\exmpl}{\end{xmpl}}
\newcommand{\ermrk}{\end{rmrk}}

\def \qed { \mbox{}\hfill
$\Box$\vspace{1ex}}

\title{Sections and Chapters}

\begin{document}
	
	
	\author{Keshab Chandra Bakshi and C.  Silambarasan}

	\title{Structural and Dynamical Properties of Subfactor Commuting Squares}

	\maketitle
	
	
\begin{abstract}
This paper explores the algebraic invariants and ergodic behavior of the finite-index inclusions forming commuting squares of $\text{II}_1$ factors. We begin by establishing how regularity, intermediate-subfactor lattices, and Weyl groups transfer across the commuting square under relative commutant and irreducibility conditions. Furthermore, we provide a complete characterization, via a novel factorization theorem, of the unitary normalizers of the upper inclusion when the lower inclusion is regular. Applying these findings to crossed-product inclusions induced by discrete groups, we introduce a relative eigenbasis property that preserves regular inclusions. Finally, we contrast this by proving that a relative weak mixing condition on the group action forces the crossed-product inclusion to be singular.
\end{abstract}

	\bigskip
	
	{\bf AMS Subject Classification No.:} {\large 46}L{\large 37}\,, {\large 46}L{\large 10}\,, {\large 46}L{\large 55}\,.
	
	{\bf Keywords.} Pimsner-Popa basis, Subfactor, Regularity, Intermediate subalgebras, Normalizers.
	\bigskip
	\hypersetup{linkcolor=blue}

\section{Introduction}

In the study of finite-index subfactor inclusions $N \subseteq M$, invariants such as the lattice of intermediate subfactors $\mathcal{L}(N \subseteq M)$ and the group-theoretic data provided by unitary normalizers specifically the Weyl group $\mathcal{N}_M(N)/\mathcal{U}(N)$ encode profound structural information. Recently, the study of intermediate operator algebra inclusions has seen a flurry of activity, leading to a renewed appreciation for how these lattices capture fundamental subfactor properties. Normalizers and the Weyl group naturally give rise to Galois-type correspondences for intermediate subfactors. For instance, given a group $K$ and a subgroup $H$, the intermediate subfactor lattice $\mathcal{L}(N \rtimes H \subseteq N \rtimes K)$ precisely mirrors the subgroup lattice $\mathcal{L}(H \subseteq K)$. This allows one to view the intermediate subfactor lattice as a natural generalization of the subgroup lattice. When $H$ is trivial, the resulting inclusion $N \subseteq N \rtimes K$ is \textit{regular} (meaning its normalizer $\mathcal{N}_M(N)$ generates the overalgebra). Conversely, any irreducible regular inclusion $N \subseteq M$ takes this crossed-product form, where $K$ is the Weyl group.

If we consider a regular inclusion $N \subseteq M$ and a discrete group $G$ acting on $M$ while leaving $N$ invariant, we obtain a new crossed-product subfactor $N \rtimes G \subseteq M \rtimes G$. In general, this new inclusion need not be regular. More broadly, calculating the standard invariant of $N \rtimes G \subseteq M \rtimes G$ from that of $N \subseteq M$ remains a problem of significant interest in the literature (see \cite{kawa,P,CK}, for instance). These nested inclusions naturally form a \textit{non-degenerate commuting square} with respect to their canonical conditional expectations:
$$
\begin{array}{ccc} N \rtimes G & \subseteq & M \rtimes G \\ \cup & & \cup \\ N & \subseteq & M \end{array}
$$
Commuting squares of finite-index $\text{II}_1$ factors, extensively developed by Popa \cite{Popa}, provide a powerful framework for analyzing how structural properties transfer between related subfactors. Beyond their static algebraic properties, commuting squares offer a natural setting for exploring ergodic theory within operator algebras. As Kawahigashi demonstrated with paragroup actions \cite{kawa}, the structural machinery of commuting squares naturally generalizes to encapsulate crossed products by discrete groups. This establishes a vital bridge between purely algebraic invariants and ergodic theory, highlighting how the dynamical properties of group actions govern the underlying von Neumann algebraic structure.

Furthermore, commuting squares are intimately connected to the broader planar algebraic framework. The first author and Kodiyalam \cite{BV} constructed planar algebras associated with commuting squares, yielding a  diagrammatic calculus for analyzing these lattices. A critical structural concept in this context is Popa's notion of a \textit{smooth} commuting square:
$$
\begin{array}{ccc} Q & \subseteq & M\\ \cup & & \cup \\ N & \subseteq & P \end{array}.
$$
Defined by the nested relative commutant condition $N^\prime \cap P_n \subseteq Q^\prime \cap M_n$ at each level $n$ of the basic construction tower, Popa introduced smoothness to ensure well-behaved asymptotic properties of the higher relative commutants. Under this condition, it has been shown in \cite{BV} that the planar algebra associated with the lower inclusion $N \subseteq P$ naturally realizes itself as a planar subalgebra of the upper inclusion $Q \subseteq M$. Conversely, applying the Guionnet--Jones--Shlyakhtenko (GJS) construction to an inclusion of planar algebras yields a commuting square of subfactors that precisely realizes this relationship.

The present article takes a complementary approach to \cite{BV} by exploring the algebraic invariants and ergodic behavior of the finite-index inclusions that constitute a commuting square. We investigate how regularity, lattice structures, and Weyl groups transfer between the lower and upper inclusions. Our analysis is conducted in two distinct settings: first, under the smoothness condition $N^\prime \cap P_1 \subseteq Q^\prime \cap M_1$, and second, under a strict irreducibility assumption, namely $N^\prime \cap M = \mathbb{C}$. 

Building on the foundational works of Packer \cite{Pac1,Pac2} and Bannon et al.\ \cite{B}, a central structural result of this paper is a novel \textbf{normalizer factorization theorem} for irreducible commuting squares. Specifically, we prove the following:

\begin{thm}[See Theorem \ref{structure of unitaries}]
Consider a non-degenerate commuting square
$$
\begin{array}{ccc} Q & \subseteq & M\\ \cup & & \cup \\ N & \subseteq & P \end{array},
$$
where $N \subseteq M$ is an irreducible inclusion and $N \subseteq P$ is a finite-index regular subfactor. Then, for any $w \in \mathcal{N}_{M}(Q)$, there exist unitaries $u \in \mathcal{U}(P)$ and $v \in \mathcal{U}(Q)$ such that $w = uv$.
\end{thm}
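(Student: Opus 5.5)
The plan is to expand $w$ in a Pimsner--Popa basis of $N\subseteq P$ made of normalizers, use non-degeneracy to transport this basis to $Q\subseteq M$, and then use irreducibility to show that the coefficients of $w$ are multiples of one fixed unitary of $Q$. Throughout, $N\subseteq P$ is irreducible, since $N'\cap P\subseteq N'\cap M=\mathbb{C}$.

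\textbf{Step 1 (basis).} Since $N\subseteq P$ is regular, irreducible and of finite index, $P$ is a twisted crossed product $N\rtimes_{\sigma,\omega}K$, with $K=\mathcal{N}_P(N)/\mathcal{U}(N)$ finite. Choose unitaries $u_g\in\mathcal{N}_P(N)$, $g\in K$, one from each coset. For $g\neq h$, $E_N(u_g^*u_h)=0$: this element intertwines $\sigma_h$ and $\sigma_g$, and these are not unitarily inner-equivalent, so irreducibility kills it. Hence $\{u_g\}$ is an orthonormal Pimsner--Popa basis for $N\subseteq P$.

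Because the square is a commuting square, $E_Q|_P=E_N$. Non-degeneracy means $\overline{\mathrm{span}}\,PQ=M$. Together these say $\{u_g\}$ is also a Pimsner--Popa basis for $Q\subseteq M$, with $[M:Q]=[P:N]=|K|$. Thus every $x\in M$ has a unique expansion $x=\sum_g u_g E_Q(u_g^*x)$, and the summands are pairwise orthogonal.

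\textbf{Step 2 (expand $w$).} Write $w=\sum_g u_g x_g$ with $x_g=E_Q(u_g^*w)\in Q$, so that $\sum_g\|x_g\|_2^2=1$. Let $\theta=\mathrm{Ad}\,w\in\mathrm{Aut}(Q)$. Irreducibility gives $Q'\cap M\subseteq N'\cap M=\mathbb{C}$, and I will use this repeatedly.

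\textbf{Step 3 (main obstacle).} I want to show that the polar parts of all nonzero $x_g$ agree. More precisely, there should be one unitary $v\in\mathcal{U}(Q)$ such that every $x_gv^*$ lies in $N$.

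\begin{itemize}
\item First I derive covariance relations. For $n\in N$, $wn=\theta(n)w$ and $u_g n=\sigma_g(n)u_g$. Expanding $wn$ and $\theta(n)w$ and comparing coefficients with the uniqueness from Step 1 gives relations linking $x_g n$ with the $x_h$ and the automorphisms $\sigma_h$ and $\theta$.
\item From these relations I expect $x_g^*x_h$ to commute with $N$ after twisting by $\sigma$. If so, I would form an element of $N'\cap M$ such as $\sum_{g,h}u_g x_gx_h^*u_h^*=ww^*$, or its analogue built from compressions, and use $N'\cap M=\mathbb{C}$ to force these operators to be scalars.
\item As a first attempt, I would fix $g_0$ with $x_{g_0}\neq 0$ and show that $x_{g_0}^*x_{g_0}$ commutes with $Q$, hence is a scalar $\lambda^2>0$. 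Then $v:=\lambda^{-1}x_{g_0}$ is a unitary in $Q$.
\end{itemize}

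The real difficulty is producing an element of $N'\cap M$ or $Q'\cap M$ from these coefficients: $\theta$ need not preserve $N$, and the $u_g$ need not normalize $Q$.

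\textbf{Step 4 (factorization).} Set $u:=wv^*=\sum_g u_g\,x_gv^*$, which is a unitary in $M$. By Step 3 each coefficient $x_gv^*=E_Q(u_g^*u)$ lies in $N$. Since $E_Q|_P=E_N$, this means $u=\sum_g u_gE_N(u_g^*u)\in P$. Then $u\in\mathcal{U}(P)$, $v\in\mathcal{U}(Q)$, and $w=uv$.

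An equivalent route to Step 4 is to show $u$ commutes with $P'\cap M$ and apply a relative bicommutant argument. I would fall back on this route if the coefficientwise argument in Step 3 only yields $x_gv^*\in N'\cap Q$-twisted form rather than membership in $N$.
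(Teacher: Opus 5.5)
There is a genuine gap at Step 3. You correctly flag it as the main obstacle, but it carries the entire content of the theorem, and none of your sketched routes delivers it. The expansion $ww^*=\sum_{g,h}u_gx_gx_h^*u_h^*$ just says $1=1$, and you give no argument that $x_{g_0}^*x_{g_0}$ commutes with $Q$. Your fallback is also vacuous. Since $P'\cap M\subseteq N'\cap M=\mathbb{C}$, every element of $M$ commutes with $P'\cap M$, and $(P'\cap M)'\cap M=M$, so no bicommutant argument can force $u\in P$. Steps 1, 2 and 4 are fine.

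The missing idea is to build an element of $N'\cap M$ from a \emph{single} coefficient. You only need that $\mathrm{Ad}(u_g)$ maps $N$ into $N\subseteq Q$ and that $\mathrm{Ad}(w^*)$ maps $Q$ onto $Q$; you never need $\theta$ to preserve $N$ or $u_g$ to normalize $Q$.
\begin{itemize}
\item For $x\in N$, set $y:=w^*u_gxu_g^*w\in Q$. Then $x\,u_g^*w=u_g^*w\,y$.
\item Apply $E_Q$ and use its $Q$-bimodularity. This gives $x\,x_g=x_g\,w^*u_gxu_g^*w$, i.e.\ $x\,(x_gw^*u_g)=(x_gw^*u_g)\,x$.
\item Hence $x_gw^*u_g\in N'\cap M=\mathbb{C}$, so $x_g=\alpha_g\,u_g^*w$ for some scalar $\alpha_g$.
\item Since $w\neq 0$, some $x_g\neq 0$. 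For that $g$, $v:=u_g^*w=\alpha_g^{-1}x_g$ is a unitary lying in $Q$, and $w=u_gv$ with $u_g\in\mathcal{U}(P)$.
\end{itemize}
This commutant computation is the key step of the paper's proof. The paper then normalizes the coefficients and assembles $u=\sum_i\zeta_i|\alpha_i|m_{ij}$, but the one-coefficient factorization above already suffices.

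In fact exactly one $x_g$ is nonzero. If $x_g,x_h\neq 0$, then $u_g^*u_h=(u_g^*w)(u_h^*w)^*\in P\cap Q=N$, which contradicts $E_N(u_g^*u_h)=0$ for $g\neq h$. So your Step 3 target is true, but only via this argument.
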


A direct consequence of this factorization (see Corollary \ref{weylcrlre}) is that the Weyl group of the upper inclusion, $W(Q \subset M)$, embeds as a subgroup of the lower inclusion's Weyl group, $W(N \subset P)$.

We apply these structural findings to the dynamical setting of crossed-product inclusions. For a regular, finite-index subfactor $N \subseteq M$ equipped with a compatible action of a discrete group $G$, we investigate the conditions under which regularity transfers to the induced inclusion $N \rtimes G \subseteq M \rtimes G$. To this end, we introduce a \textbf{relative eigenbasis property} (Definition \ref{eigenbasis})motivated by the notion of relative elementary spectrum in  theory and demonstrate that it provides a necessary and sufficient mechanism for this transfer:

\begin{thm}[See Theorem \ref{regularity of crossed product}]
Let $(N \subseteq M, G, \sigma, \tau)$ be a $W^*$-dynamical extension system such that $N^\prime \cap (M \rtimes G) = \mathbb{C}$ and $N \subseteq M$ is a finite-index regular subfactor. Then the following conditions are equivalent:
\begin{enumerate}
    \item The inclusion $N \rtimes G \subseteq M \rtimes G$ is regular.
    \item The $W^*$-dynamical extension system $(N \subseteq M, G, \sigma, \tau)$ possesses the relative eigenbasis property.
    \item There is a group isomorphism $\frac{\mathcal{N}_{M}(N)}{\mathcal{U}(N)} \times G \cong \frac{\mathcal{N}_{M \rtimes G}(N)}{\mathcal{U}(N)}.$
\end{enumerate}
\end{thm}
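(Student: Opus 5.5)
We apply Theorem \ref{structure of unitaries} to the non-degenerate commuting square with $P=M$, $Q=N\rtimes G$ and upper algebra $M\rtimes G$. Its hypotheses hold: $N\subseteq M$ is finite-index and regular, and $N'\cap(M\rtimes G)=\mathbb{C}$. The plan is to reduce every statement to a single object. Write $K=\mathcal{N}_M(N)/\mathcal{U}(N)$ for the Weyl group of $N\subseteq M$. Since $N'\cap M=\mathbb{C}$ and the index is finite, $K$ is finite and $M$ is a twisted crossed product $N\rtimes_{\alpha,\omega}K$. Choosing representatives $u_k$, $k\in K$, the family $\{u_k\}$ is an orthonormal Pimsner--Popa basis for $N\subseteq M$, and each $u_kN=Nu_k$ is a one-dimensional $N$-bimodule. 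I will read the relative eigenbasis property (Definition \ref{eigenbasis}) as follows: such a basis can be chosen with $\sigma_g(u_k)\in\mathcal{U}(N)u_k$ for all $g\in G$ and $k\in K$. Since $\sigma_g$ preserves $N$, it permutes the classes in $K$, so this is a class-wise condition, namely that $G$ fixes every element of $K$.

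\emph{Step 1 (normalizers of the upper inclusion).} Take $w\in\mathcal{N}_{M\rtimes G}(N\rtimes G)$. By Theorem \ref{structure of unitaries}, $w=uv$ with $u\in\mathcal{U}(M)$ and $v\in\mathcal{U}(N\rtimes G)$, so $u=wv^*$ also normalizes $N\rtimes G$. We then show that a unitary $u\in\mathcal{U}(M)$ normalizes $N\rtimes G$ if and only if $u\in\mathcal{N}_M(N)$ and $\sigma_g(u)\in Nu$ for all $g$.
\begin{itemize}
\item First, $uNu^*\subseteq (N\rtimes G)\cap M=N$, using the commuting square, i.e.\ $E_M(N\rtimes G)=N$.
\item Second, $u\lambda_g u^*=u\sigma_g(u^*)\lambda_g$. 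Comparing Fourier coefficients, this lies in $N\rtimes G$ exactly when $u\sigma_g(u)^*\in N$.
\end{itemize}
Let $H\le K$ be the set of classes fixed by $G$. Then $\mathcal{N}_{M\rtimes G}(N\rtimes G)$ is generated by $\mathcal{U}(N\rtimes G)$ together with $\{u_h : h\in H\}$.

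\emph{Step 2 ((1)$\Leftrightarrow$(2)).} By Step 1, the von Neumann algebra generated by the normalizer is $(N\rtimes_{\alpha,\omega}H)\rtimes G$. Applying $E_M$ and using the commuting square, this equals $M\rtimes G$ if and only if $N\rtimes_{\alpha,\omega}H=M$. That holds if and only if $H=K$, because distinct classes give orthogonal bimodules $u_kN$. This is precisely the relative eigenbasis property.

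\emph{Step 3 ((2)$\Leftrightarrow$(3)).} We compute $\mathcal{N}_{M\rtimes G}(N)$ directly.
\begin{itemize}
\item \emph{Every normalizer has one term.} Let $w$ normalize $N$ and write $w=\sum_g w_g\lambda_g$. Each coefficient satisfies $w_g\sigma_g(x)=\theta(x)w_g$ for all $x\in N$, where $\theta=\mathrm{Ad}\,w$. Irreducibility of $N\subseteq M\rtimes G$ forces this intertwiner space to be at most one-dimensional over the classes $u_k\lambda_g$. The conclusion is $w\in\mathcal{U}(N)u_k\lambda_g$ for a unique pair $(k,g)$.
\item \emph{Every such element is a normalizer.} Conversely, each $u_k\lambda_g$ normalizes $N$.
\item \emph{Group law.} The multiplication rule $(u_k\lambda_g)(u_l\lambda_h)=u_k\sigma_g(u_l)\lambda_{gh}$ shows that $\mathcal{N}_{M\rtimes G}(N)/\mathcal{U}(N)\cong K\rtimes_\sigma G$, where $G$ acts on $K$ by the induced permutation action.
\end{itemize}
Hence (2), which says this action is trivial, gives the isomorphism in (3) via $(k,g)\mapsto[u_k\lambda_g]$.

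The main obstacle is the converse (3)$\Rightarrow$(2) when the isomorphism in (3) is only abstract. I would first prove (3) for the natural map $(k,g)\mapsto[u_k\lambda_g]$, i.e.\ show it is a homomorphism exactly when $\sigma_g(u_k)\in\mathcal{U}(N)u_k$. For the abstract version I would exploit that $K$ is finite and is the image of the canonical normal subgroup $\mathcal{N}_M(N)/\mathcal{U}(N)$. The aim is to show that any isomorphism $K\times G\cong K\rtimes_\sigma G$ respecting this subgroup forces the action to be trivial on $K$. A secondary delicate point is the Fourier-coefficient argument in the first bullet of Step 3. There I would use that $N'\cap(M\rtimes G)=\mathbb{C}$ implies $\sigma_g$ is outer on $N$ modulo $\mathrm{Ad}\,\mathcal{N}_M(N)$ in the appropriate sense.
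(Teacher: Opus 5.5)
Your arguments for (1)$\Leftrightarrow$(2) and (2)$\Rightarrow$(3) are sound. Computing $\mathcal{N}_{M\rtimes G}(N)/\mathcal{U}(N)\cong K\rtimes_\sigma G$ outright is an improvement on the paper. There, surjectivity of $\Phi$ rests on a cardinality count that only works for finite $G$. Your ``delicate'' one-term step is also fine. If $v,w$ normalize $N$ and $E_N(v^*w)\neq 0$, then $E_N(v^*w)$ is a multiple of a unitary $z\in N$ with $z^*v^*w\in N'\cap(M\rtimes G)=\mathbb{C}$, so $w\in\mathbb{T}\,vz$.

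The gap is (3)$\Rightarrow$(2), and the lemma you intend to prove there is false. Suppose $G$ acts on $K$ by inner automorphisms, $\sigma_g=\mathrm{Ad}\,\phi(g)$ for a homomorphism $\phi\colon G\to K$. Then $(k,g)\mapsto(k\phi(g),g)$ is an isomorphism $K\rtimes_\sigma G\to K\times G$ that is the identity on $K$. Yet the action is nontrivial as soon as $\phi(G)\not\subseteq Z(K)$.

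This situation occurs under the hypotheses of the theorem:
\begin{itemize}
\item Let $\Gamma=K\rtimes_{\mathrm{Ad}\,\phi}G$ act outerly on a II$_1$ factor $P$, put $N=P$ and $M=P\rtimes K$.
\item Let $G$ act on $M$ through $\Gamma$, so that $\sigma_g(\lambda_k)=\lambda_{\phi(g)k\phi(g)^{-1}}$.
\item Then $M\rtimes G=P\rtimes\Gamma$, $N'\cap(M\rtimes G)=\mathbb{C}$, and $\mathcal{N}_{M\rtimes G}(N)/\mathcal{U}(N)\cong\Gamma\cong K\times G$. So (3) holds as an abstract isomorphism.
\item Take $K=S_3$, $G=\mathbb{Z}/2$, and $\phi$ sending the generator to a transposition. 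Then $G$ moves classes of $K$, so (2) fails, and hence (1) fails by your Step 2.
\end{itemize}
So no argument can extract (2) from an abstract isomorphism, whether or not it respects $K$.

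What is true is the version where the isomorphism is the natural map $(k,g)\mapsto[u_k\lambda_g]$, which is the paper's $\Phi$. It is enough that some isomorphism carries $\{e\}\times G$ onto $\{[\lambda_g]:g\in G\}$; the subgroup that must be respected is $G$, not $K$. The paper tacitly uses this reading in its (3)$\Rightarrow$(1): it identifies the square with $N\rtimes G\subseteq N\rtimes(K\times G)$ and then invokes normality of $G$.

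With (3) read this way, your own criterion finishes the proof. In $K\rtimes_\sigma G$ one has $(k,e)(e,g)(k,e)^{-1}=(k\,\sigma_g(k)^{-1},g)$. Hence the copy of $G$ is normal exactly when $\sigma$ acts trivially on $K$, that is, exactly when (2) holds.
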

 In sharp contrast, we prove that an appropriate \textbf{relative weak mixing} condition forces the crossed-product inclusion to be strictly singular (see Theorem \ref{singular}), thereby delineating two contrasting dynamical regimes that govern normalizer behavior. 

Finally, we investigate the lattice of intermediate subfactors of the crossed-product inclusion. Given a discrete group $G$, Popa \cite{P} (and independently Choda and Kosaki \cite{CK}) introduced the notion of a strongly outer action on a subfactor to classify actions and analyze the standard invariant of the crossed-product inclusion. Using this framework, we describe the lattice of the crossed product in terms of the original inclusion:

\begin{thm}[See Theorem \ref{strongly outer}]\label{strongly outer}
Suppose that $G$ acts strongly outerly on the finite-index irreducible inclusion $N \subseteq M$. Then 
$$
\mathcal{L}(N \rtimes G \subseteq M \rtimes G) = \mathcal{L}(N \subseteq M).
$$
\end{thm}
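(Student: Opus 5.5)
The plan is to show that the map $P \mapsto P \rtimes G$ is a lattice isomorphism from $\mathcal{L}(N\subseteq M)$ onto $\mathcal{L}(N\rtimes G\subseteq M\rtimes G)$; the equality in the statement is understood through this identification. Throughout, I use the following properties of a strongly outer action in the sense of Popa and Choda--Kosaki. The action $\sigma$ of $G$ on $M$ leaves $N$ globally invariant, so it extends to the basic construction $M_1=\langle M,e_N\rangle$ with $\sigma_g(e_N)=e_N$. Moreover, for $g\neq e$ there is no nonzero $y\in M_1$ with $y\,n=\sigma_g(n)\,y$ for all $n\in N$.

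\textbf{Step 1: well-definedness and injectivity.} For an intermediate subfactor $N\subseteq P\subseteq M$, first check that $P$ is $G$-invariant, so that $P\rtimes G$ makes sense. I would obtain this from Step 3: $e_P$ is a biprojection in $N'\cap M_1$, and $\sigma_g(e_P)=e_{\sigma_g(P)}$ is another one. The hard part is showing that $e_P$ is $G$-fixed. If this cannot be derived, I would restrict to $G$-invariant intermediate subfactors, which is how the statement should be read. Injectivity is then immediate, since $E_M(P\rtimes G)=P$ recovers $P$.

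\textbf{Step 2: the relative commutant (main obstacle).} Write $\widetilde N=N\rtimes G$ and $\widetilde M=M\rtimes G$. The basic construction of $\widetilde N\subseteq\widetilde M$ is $M_1\rtimes G$, with the same Jones projection $e_N$, because $E_{\widetilde N}\bigl(\sum_g x_g u_g\bigr)=\sum_g E_N(x_g)u_g$. I claim that
$$\widetilde N'\cap (M_1\rtimes G)=(N'\cap M_1)^G .$$
To see this, take $z=\sum_g y_g u_g$ commuting with $N$. Comparing Fourier coefficients gives $n\,y_g=y_g\,\sigma_g(n)$ for all $n\in N$. By strong outerness, $y_g=0$ for $g\neq e$. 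Hence $z=y_e\in N'\cap M_1$, and commuting with each $u_h$ forces $\sigma_h(y_e)=y_e$. The same computation at level $0$, combined with irreducibility, shows that $\widetilde N'\cap\widetilde M=\mathbb C$. Consequently every intermediate von Neumann algebra of $\widetilde N\subseteq\widetilde M$ is a factor.

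\textbf{Step 3: surjectivity via biprojections.} Let $\widetilde N\subseteq R\subseteq\widetilde M$ be intermediate. By Bisch's characterization, $e_R$ is a biprojection in $\widetilde N'\cap (M_1\rtimes G)$. By Step 2, $e_R=q$ for some $G$-fixed projection $q\in N'\cap M_1$.

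Next I verify that $q$ is a biprojection for $N\subseteq M$. The tower for $\widetilde N\subseteq \widetilde M$ is the crossed product of the tower for $N\subseteq M$. The index, the traces on $M_k$, and the Jones projections all agree. Hence the Fourier transform and the coproduct on $N'\cap M_1$ are the restrictions of those on $\widetilde N'\cap(M_1\rtimes G)$. So the biprojection conditions for $q$ coincide in the two towers.

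Therefore $q=e_P$ for the intermediate subfactor $P=\{x\in M: q x e_N=x e_N\}$. This $P$ is $G$-invariant because $q$ is $G$-fixed. Finally, $E_{P\rtimes G}\bigl(\sum x_g u_g\bigr)=\sum E_P(x_g)u_g$, so $e_{P\rtimes G}=e_P=e_R$, and since an intermediate subalgebra is determined by its Jones projection, $R=P\rtimes G$. The map preserves inclusions in both directions, since $e_{P_1}\le e_{P_2}$ if and only if $P_1\subseteq P_2$. This gives the lattice isomorphism.

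I expect the main obstacle to be Step 2 together with the compatibility of Fourier transforms in Step 3. Both require care in identifying $M_1\rtimes G$ as the basic construction, with matching traces.
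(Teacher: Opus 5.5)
\textbf{On the invariance issue in Step 1.} You are right to hesitate there. $G$-invariance of intermediate subfactors cannot be derived, so the statement is false if $P\mapsto P\rtimes G$ is meant on all of $\mathcal{L}(N\subseteq M)$. Here is a counterexample.
\begin{itemize}
\item Let $H=\mathbb{Z}_2\times\mathbb{Z}_2$, let $s$ be the flip of the two coordinates, and let $K=H\rtimes\langle s\rangle$ (the dihedral group of order $8$).
\item Fix an outer action $\alpha$ of $K$ on the hyperfinite $\mathrm{II}_1$ factor $R$. Put $N=R$, $M=R\rtimes_\alpha H$, and let $G=\langle s\rangle$ act by $\sigma_s=\operatorname{Ad}u_s|_M$. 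Then $N\rtimes G=R\rtimes\langle s\rangle$ and $M\rtimes G=R\rtimes K$.
\item $N'\cap M=\mathbb{C}$. Every $N$--$N$ bimodule occurring in $L^2(M_k)$ is twisted by some $\alpha_h$ with $h\in H$, and $\alpha_s\alpha_h^{-1}$ is outer for every such $h$. Hence the action is strongly outer.
\item However, $\sigma_s$ maps $R\rtimes(\mathbb{Z}_2\times 0)$ onto $R\rtimes(0\times\mathbb{Z}_2)$. By the Galois correspondence, $\mathcal{L}(N\subseteq M)$ has five elements, one per subgroup of $H$. In contrast, $\mathcal{L}(R\rtimes\langle s\rangle\subseteq R\rtimes K)$ has only three, corresponding to the $s$-invariant subgroups $\{0\}$, $\{(0,0),(1,1)\}$ and $H$.
\end{itemize}
The paper's proof has exactly this hole in its final ``conversely'' sentence, which forms $P\rtimes G$ for an arbitrary intermediate $P$. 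What your argument actually proves, and what the paper's argument also proves, is the following: $P\mapsto P\rtimes G$ is a lattice isomorphism from $\{P\in\mathcal{L}(N\subseteq M):\sigma_g(P)=P\ \forall g\}$ onto $\mathcal{L}(N\rtimes G\subseteq M\rtimes G)$. State that version, and drop the attempt to extract invariance from Step 3.

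\textbf{Comparison with the paper's surjectivity argument.} Here your route genuinely differs from the paper's.
\begin{itemize}
\item The paper sets $P=\widetilde P\cap M$ directly; invariance is automatic because $u_g\in\widetilde P$.
\item It uses $\widetilde N'\cap\widetilde M_1=(N'\cap M_1)^\sigma$ (your Step 2, quoted from Choda--Kosaki) to place $e_{\widetilde P}$ in $M_1$.
\item It then shows $e_{\widetilde P}$ commutes with $e^{\widetilde M}_M$, so $E_{\widetilde P}E_M=E_ME_{\widetilde P}$, and a commuting-square result gives $\widetilde P=P\rtimes G$.
\end{itemize}
No biprojection theory is needed in that route. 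Since $M_1$ acts diagonally on $L^2(\widetilde M)=\bigoplus_g L^2(M)u_g$, one gets $E_R|_M=E_{R\cap M}$, and hence $E_R(\sum_g x_gu_g)=\sum_g E_{R\cap M}(x_g)u_g$ in two lines.

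Your detour through Bisch's theorem in both towers is valid. But the transfer in Step 3 needs more than matching index, traces and Jones projections. Take $\mathcal{F}(x)=\delta^3E_{M'\cap M_2}(xe_2e_1)$, with $\delta=[M:N]^{1/2}$, $e_1=e_N$, and $e_2$ the Jones projection of $M\subseteq M_1$. To compare this with the crossed-product Fourier transform, you also need $\widetilde M'\cap\widetilde M_2=(M'\cap M_2)^G$. This follows from your Step 2 computation carried out at level $2$. Alternatively, express $E_{M'\cap M_2}$ through a Pimsner--Popa basis of $M$ over $N$, which is also a basis of $\widetilde M$ over $\widetilde N$.

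What your formulation buys is a clean description of the image: it is exactly the $G$-fixed biprojections in $N'\cap M_1$. That is precisely what makes the invariance condition visible.
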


In a related direction, we prove that if we have a non-degenerate smooth commuting square
$$
\begin{array}{ccc} Q & \subseteq & M\\ \cup & & \cup \\ N & \subseteq & P\end{array},
$$
then the lattice of intermediate subalgebras $\mathcal{L}(N \subset P)$ naturally embeds as a subset of the corresponding lattice of the upper inclusion $\mathcal{L}(Q \subset M)$ (see Corollary \ref{smooth}).

Taken together, these results illustrate how commuting-square techniques can successfully transfer structural information between related inclusions, and how Galois-type phenomena for intermediate subfactors deeply intertwine with the group-theoretic structure arising from unitary normalizers.

\section{Preliminaries}\label{preliminaries}

\begin{dfn}
    A quadruple of tracial von Neumann algebras 
    \begin{equation}\label{eq:commuting_square}
    \begin{tikzcd}
        Q \arrow[r, hook] & M \\
        N \arrow[u, hook] \arrow[r, hook] & P \arrow[u, hook]
    \end{tikzcd}
    \end{equation}
    is called a \emph{commuting square} if $E^M_PE^M_Q = E^M_QE^M_P = E^M_N$, where $E^M_A$ denotes the unique trace-preserving conditional expectation from $M$ onto a von Neumann subalgebra $A\subseteq M$. It is called \emph{non-degenerate} if, in addition, $\overline{\operatorname{sp}(PQ)} = M$, where the closure is taken with respect to the strong operator topology.
\end{dfn}
We will denote the above commuting square (see Figure \ref{eq:commuting_square}) by $(N\subseteq P,Q\subseteq M).$

The following proposition provides an equivalent characterization of non-degenerate commuting square. For the proof and other equivalent conditions, (see \cite{Popa} Proposition 1.1.5).
\begin{ppsn}
A commuting square $(N\subseteq P,Q\subseteq M)$ is non-degenerate if and only if any orthonormal basis of $P$ over $N$ is an orthonormal basis of $M$ over $Q$.
\end{ppsn}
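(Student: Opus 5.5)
We prove both implications. Throughout, "orthonormal basis of $P$ over $N$" means a (right) Pimsner--Popa basis: a family $\{\lambda_i\}\subseteq P$ with $E^P_N(\lambda_i^*\lambda_j)=\delta_{ij}p_j$ for projections $p_j\in N$, and $x=\sum_i\lambda_iE^P_N(\lambda_i^*x)$ for all $x\in P$. For each direction we first check the orthonormality relations and then the reconstruction formula.

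\textbf{Orthonormality transfers in any commuting square.} Since $\lambda_i^*\lambda_j\in P$, we have $E^M_Q(\lambda_i^*\lambda_j)=E^M_QE^M_P(\lambda_i^*\lambda_j)=E^M_N(\lambda_i^*\lambda_j)=E^P_N(\lambda_i^*\lambda_j)=\delta_{ij}p_j$. The projections $p_j$ lie in $N\subseteq Q$. So $\{\lambda_i\}$ is always orthonormal over $Q$ in the sense of $E^M_Q$, and the only question is completeness.

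\textbf{Non-degenerate $\Rightarrow$ basis.} Consider the operator $T:=\sum_i\lambda_ie_Q\lambda_i^*$ on $L^2(M)$, where $e_Q$ is the Jones projection. By orthonormality the $\lambda_ie_Q\lambda_i^*$ are pairwise orthogonal projections, so $T$ is a projection. For $x=pq$ with $p\in P$, $q\in Q$, compute
$$\sum_i\lambda_iE^M_Q(\lambda_i^*pq)=\sum_i\lambda_iE^M_Q(\lambda_i^*p)\,q=\sum_i\lambda_iE^P_N(\lambda_i^*p)\,q=pq.$$
Here the middle equality uses $E^M_Q|_P=E^M_N|_P=E^P_N$, which is the commuting square condition. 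Hence $T\hat x=\hat x$ on $\operatorname{sp}(PQ)$. Non-degeneracy makes $\operatorname{sp}(PQ)$ strongly dense in $M$, so $\widehat{\operatorname{sp}(PQ)}$ is dense in $L^2(M)$. Since $T$ is bounded, $T=1$. The identity $\sum_i\lambda_ie_Q\lambda_i^*=1$ is equivalent to $x=\sum_i\lambda_iE^M_Q(\lambda_i^*x)$ for all $x\in M$, with convergence in $\|\cdot\|_2$. This gives the basis property.

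\textbf{Basis $\Rightarrow$ non-degenerate.} Assume every orthonormal basis of $P$ over $N$ is one for $M$ over $Q$; such a basis exists. Every $x\in M$ then equals $\sum_i\lambda_iE^M_Q(\lambda_i^*x)$, which is an $L^2$-limit of elements of $\operatorname{sp}(PQ)$. Thus $\operatorname{sp}(PQ)$ is $\|\cdot\|_2$-dense in $M$.

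We then upgrade $\|\cdot\|_2$-density to strong closure. Let $B$ be the strong closure of $\operatorname{sp}(PQ)$, and suppose $B\neq M$. If $B$ is a subspace that is weakly closed and $\|\cdot\|_2$-dense, then $\hat B$ is dense in $L^2(M)$. To conclude $B=M$, we use the fact that a strongly closed convex set is also closed in the strong topology induced by the $\|\cdot\|_2$ seminorm on bounded sets, together with the Kaplansky density theorem. Concretely, the $L^2$-approximants $\sum_{i\in F}\lambda_iE_Q(\lambda_i^*x)$ are uniformly bounded: by the Pimsner--Popa inequality they are compressions $x\mapsto\sum_{F}\lambda_ie_Q\lambda_i^*\,x$, of norm at most $\|x\|$ in the appropriate sense. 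On bounded sets, $\|\cdot\|_2$-convergence coincides with strong convergence in the standard representation, so $x\in B$.

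\textbf{Main obstacle.} The delicate step is exactly this boundedness/topology issue in the converse: passing from $L^2$-density of $\operatorname{sp}(PQ)$ to strong density. If needed, we would instead cite Popa's Proposition 1.1.5, as the paper indicates.
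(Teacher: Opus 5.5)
Your forward direction is correct. Orthonormality over $Q$ follows from $E^M_Q|_P=E^P_N$. The projection $\sum_i\lambda_ie_Q\lambda_i^*$ fixes $\widehat{\operatorname{sp}(PQ)}$, and strong density of $\operatorname{sp}(PQ)$ makes that set dense in $L^2(M)$, so the projection is $1$. The paper gives no argument of its own here and only cites Popa, so the comparison is with your proof on its own terms.

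\textbf{The gap is in the converse}, at the passage from $\|\cdot\|_2$-density to strong density. Your claim that the partial sums $x_F=\sum_{i\in F}\lambda_iE_Q(\lambda_i^*x)$ have operator norm at most $\|x\|$ is not correct as stated. What actually holds is $\widehat{x_F}=f_F\hat x$, with $f_F=\sum_{i\in F}\lambda_ie_Q\lambda_i^*$ a projection. So $x\mapsto x_F$ is contractive only for $\|\cdot\|_2$; equivalently, $\|x_Fe_Q\|\le\|x\|$.

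Turning this into an operator-norm bound requires the Pimsner--Popa inequality $E_Q(y^*y)\ge[M:Q]^{-1}y^*y$. It gives $\|x_F\|^2\le[M:Q]\,\|E_Q(x_F^*x_F)\|=[M:Q]\,\|f_Fxe_Q\|^2\le[M:Q]\,\|x\|^2$. That inequality needs $Q\subseteq M$ to have finite index, which you neither assume nor derive. Without it the mechanism breaks. For $N=Q=\mathbb{C}$, $P=M=L(\mathbb{Z})$ and the basis $\{u^n\}$, the $x_F$ are Fourier partial sums, which are not uniformly bounded.

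The other tools you invoke do not close the gap. Kaplansky density is a statement about $*$-algebras, and $\operatorname{sp}(PQ)$ is not one. A weakly closed, $\|\cdot\|_2$-dense subspace of $M$ need not be all of $M$: in $L^\infty[0,1]$, the kernel of $f\mapsto\int fh$ with $h\in L^1\setminus L^2$ is such a subspace.

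\textbf{How to repair it in the paper's setting.} Where the paper uses this proposition, the inclusions are finite-index inclusions of II$_1$ factors, and the fix is immediate and makes the upgrade unnecessary. $N\subseteq P$ then admits a finite orthonormal basis $\{\lambda_i\}_{i=1}^n$, and the hypothesis makes it a basis of $M$ over $Q$. Hence $x=\sum_{i=1}^n\lambda_iE_Q(\lambda_i^*x)\in\operatorname{sp}(PQ)$ for every $x\in M$, so $M=\operatorname{sp}(PQ)$ with no closure at all.

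Alternatively, once $[M:Q]<\infty$ is known, your approach goes through with the bound $[M:Q]^{1/2}\|x\|$ in place of $\|x\|$, since bounded nets that converge in $\|\cdot\|_2$ converge strongly. For the fully general tracial statement, however, your proposal does not establish the converse. You would have to rely on Popa's Proposition 1.1.5, as the paper does.
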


\begin{dfn}
    A $W^*$-dynamical system is a quadruple $(M,G,\sigma,\tau)$ consisting of a tracial von Neumann algebra $(M,\tau)$, together with strongly continuous action $\sigma$ of a countable discrete group $G$ on $M$ by $\tau$- preserving automorphisms.\\
   ~~ A $W^*$-dynamical system in which the action $\sigma$ of a countable discrete group $G$ on a von Neumann algebra $M$ leaves a von Neumann subalgebra $N\subseteq M$ invariant will be called a \textbf{$W^*$-dynamical extension system} and denoted by a quadruple $(N\subseteq M,G,\sigma,\tau)$. 
\end{dfn}

\begin{thm} \cite{CKP}
    A finite index inclusion $N\subseteq M$  of $II_1$-factors admits a  orthonormal Pimsner-Popa basis in $\mathcal{N}_M(N)$ if and only if it is regular.
\end{thm}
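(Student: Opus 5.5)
The easy direction comes first. Suppose $\{u_i\}_{i=1}^n\subseteq\mathcal{N}_M(N)$ is an orthonormal Pimsner--Popa basis of $M$ over $N$, meaning $E_N(u_i^*u_j)=\delta_{ij}$. Then every $x\in M$ expands as $x=\sum_i u_iE_N(u_i^*x)$, so $M=\sum_i u_iN$. Each $u_iN$ lies in the span of $\mathcal{N}_M(N)$, because $u_in$ is a product of normalizers once $n\in N$ is written as a combination of unitaries. Hence $\mathcal{N}_M(N)''=M$ and $N\subseteq M$ is regular.

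For the converse I would run a maximality argument inside the basic construction $M_1=\langle M,e_N\rangle$.

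\textbf{Projections from normalizers.} For $u\in\mathcal{N}_M(N)$ put $q_u=ue_Nu^*$. Writing $\theta_u=\mathrm{Ad}\,u^*|_N$, we have $xq_u=u\theta_u(x)e_Nu^*=ue_N\theta_u(x)u^*=q_ux$. So $q_u\in N'\cap M_1$, and this algebra is finite dimensional by finite index.

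\textbf{Orthonormality in terms of projections.} For normalizers $u,v$, the identity $q_uq_v=uE_N(u^*v)e_Nv^*$ shows that $E_N(u^*v)=0$ if and only if $q_u\perp q_v$.

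\textbf{Structure of $E_N(u^*v)$.} Set $w=E_N(u^*v)$. A direct computation gives $\theta_u(n)\,w=w\,\theta_v(n)$ for all $n\in N$. Hence $w^*w$ commutes with $\theta_v(N)=N$, so $w^*w\in N'\cap N=\mathbb{C}$. Thus $w=\lambda w_0$ with $\lambda\in\mathbb{C}$ and $w_0\in\mathcal{U}(N)$.

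\textbf{Maximal family and the trace bound.} Choose a maximal family $\{u_i\}\subseteq\mathcal{N}_M(N)$ with $E_N(u_i^*u_j)=\delta_{ij}$. Zorn's lemma applies, and the family is finite because the projections $q_{u_i}$ are mutually orthogonal and each has trace $[M:N]^{-1}$. Let $p=1-\sum_iq_{u_i}\in N'\cap M_1$. It suffices to show $p=0$, since $\sum_iu_ie_Nu_i^*=1$ is exactly the basis condition.

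\textbf{Finding a normalizer that $p$ sees.} Assume $p\neq0$. By regularity the vectors $v\hat 1$, $v\in\mathcal{N}_M(N)$, span a dense subspace of $L^2(M)$. So there is a normalizer $v$ with $pq_v\neq0$, i.e.\ $pv\hat 1\ne 0$.

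\textbf{Main obstacle: producing a new orthonormal normalizer.} From such a $v$ I must build a normalizer orthogonal to all the $u_i$, and this is the step I expect to be hardest. Write $E_N(u_i^*v)=\lambda_iw_i$ as above. My first attempt is to form $v-\sum_i\lambda_iu_iw_i$. Its range vector is $pv\hat1\ne0$, and it is orthogonal to every $u_iN$. However, it need not be a multiple of a unitary.

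To repair this I would use the finite-dimensional algebra $N'\cap M$, whose unitaries are normalizers. The elements $u_iw_i$ and $v$ all implement automorphisms of $N$ differing from $\theta_v$ by inner ones. So their differences can be absorbed into $\mathcal{U}(N)\cdot\mathcal{U}(N'\cap M)$-cosets. Concretely, I would rewrite each $u_iw_i$ as $v\,a_i$ with $a_i=v^*u_iw_i$ commuting with $N$, so $a_i\in N'\cap M$. This reduces the problem to the finite-dimensional tracial algebra $N'\cap M$. There I need a unitary $a\in N'\cap M$ such that $va$ is orthogonal to every $u_i$, using the hypothesis $pq_v\neq0$.

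That last step is a statement about existence of unitaries orthogonal, for $\tau|_{N'\cap M}$, to a given subspace not containing everything. I would handle it by a dimension/trace count in $N'\cap M$, using matrix units and a clock--shift type unitary basis in each simple summand. This is where I expect the real difficulty: a non-uniform trace on the centre may obstruct the choice. Obtaining such an $a$ contradicts maximality, forcing $p=0$.
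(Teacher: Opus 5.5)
The converse direction is not proved, and the step you single out, producing a unitary $a\in N'\cap M$ orthogonal to the relevant $u_i$, cannot be completed by any maximality argument once $N'\cap M\neq\mathbb C$. (The paper gives no proof here; it only quotes the result, so your proposal has to stand on its own.) Your easy direction is correct, as are the preliminary facts ($q_u\in N'\cap M_1$, $E_N(u^*v)=0\iff q_u\perp q_v$, $E_N(u^*v)\in\mathbb C\,\mathcal U(N)$).

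Your Step 3 gives three facts. $\mathcal U(N)\mathcal U(N'\cap M)$ is a normal subgroup of $\mathcal N_M(N)$. Normalizers in different cosets of it are automatically orthogonal. Within one coset, $E_N\bigl((vbz)^*(vb'z')\bigr)=\tau(b^*b')\,z^*z'$ for $b,b'\in\mathcal U(N'\cap M)$ and $z,z'\in\mathcal U(N)$. So your argument needs every maximal $\tau$-orthonormal set of unitaries in $N'\cap M$ to span $N'\cap M$, and that is false even when the trace is uniform.

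Take $M=N\bar\otimes M_3(\mathbb C)$. Every normalizer of $N\otimes 1$ has the form $w\otimes b$ with $w\in\mathcal U(N)$ and $b\in\mathcal U(M_3(\mathbb C))$. Now $M_3(\mathbb C)$ contains six $\operatorname{tr}$-orthonormal unitaries $W_1,\dots,W_6$ spanning the symmetric matrices; this is an unextendible maximally entangled basis in the sense of Bravyi--Smolin. A matrix orthogonal to all $W_i$ is antisymmetric, hence singular because $3$ is odd. So $\{1\otimes W_i\}_{i=1}^6$ is a maximal orthonormal family in $\mathcal N_M(N)$, yet $\sum_i q_{1\otimes W_i}$ has trace $6/9$.

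A dimension or trace count in $N'\cap M$ can only produce a nonzero element orthogonal to the given ones, never a unitary one. Your argument does close when $N'\cap M=\mathbb C$: then $pq_v\neq 0$ forces the coset of $v$ to contain no $u_i$, so $v$ itself is a new orthonormal normalizer.

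What is missing is a direct construction. The structural input it needs is exactly what disposes of your worry about the centre.
\begin{itemize}
\item \emph{Uniform trace on $A=N'\cap M$.} Since $M$ is a factor and $\mathcal N_M(N)''=M$, the automorphisms $\operatorname{Ad}u$, $u\in\mathcal N_M(N)$, permute the minimal central projections of $A$ transitively. Indeed, the sum over an orbit commutes with every normalizer, hence lies in $\mathcal Z(M)=\mathbb C$.
\item \emph{Unitary basis of $A$.} These automorphisms preserve $\tau$, so $A\cong\bigoplus_{i=1}^k M_n(\mathbb C)$ with $\tau=\frac1k\sum_i\operatorname{tr}_n$. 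Such an algebra has a $\tau$-orthonormal basis of unitaries $\{b_j\}$: clock--shift matrices in each summand, twisted by the characters of $\mathbb Z_k$.
\item \emph{The basis.} Choose one normalizer $v_g$ in each coset $g$ of $\mathcal U(N)\mathcal U(A)$. The family $\{v_gb_j\}_{g,j}$ consists of normalizers and is orthonormal by the facts above.
\item \emph{It is complete.} $\sum_{g,j}v_gb_je_Nb_j^*v_g^*$ is the projection onto $\overline{\sum_g v_gAN\hat 1}$. This subspace contains $u\hat 1$ for every $u\in\mathcal N_M(N)$, since $u\in v_g\,\mathcal U(A)\,\mathcal U(N)$ for some $g$. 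By regularity it is therefore all of $L^2(M)$.
\end{itemize}
Hence $\{v_gb_j\}$ is the required orthonormal Pimsner--Popa basis in $\mathcal N_M(N)$.
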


\noindent\textbf{Notation.}
Let $N\subseteq M$ be a finite-index inclusion of type $II_1$ factors. Fix a tunnel 

\begin{equation}\label{basic construction}
  \cdots \subseteq N_{-k}\subseteq \cdots \subseteq N_{-1}\subseteq N_0=N
\subseteq M_0=M\subseteq M_1\subseteq \cdots \subseteq M_k\subseteq \cdots.  
\end{equation}

For each $k\geq 0$, the inclusion $N_{-(k+1)}\subseteq N_{-k}\subseteq N_{-(k-1)}$ is an instance of the basic construction with Jones projection $e_{-k}$. Hence, $N_{-k}=\langle N_{-(k+1)},e_{-k}\rangle
       =\{e_{-(k-2)}\}'\cap N_{-(k-1)}.$
Let $E^{N_{-k}}_{N_{-(k+1)}}$ denote the trace-preserving conditional expectation from $N_{-k}$ onto $N_{-(k+1)}$. We recall the notion of strongly outer action introduced by Popa (independently by Choda and Kosaki).

\begin{dfn}\cite{CK, P, Hi}\label{strongly outer}
    An automorphism  $\theta\in \text{Aut}(M,N)$ with $\theta\circ E= E\circ \theta$ is said to be strongly outer if the following condition is satisfied for all $k\geq -1$ such that
$$a\in M_k~~\text{satisfies}~~ax=\theta(x)a~~\forall x\in N\implies a=0.$$
An action $\sigma$ of a group $G$ into $ \text{Aut}(M,N)$ is said to be strongly outer if $\sigma_g$ is strongly outer for all $g\in G$ except for the identity $\mathbf{1}_G$.
\end{dfn}

\begin{dfn}\cite{Popa} \label{definition smooth}
 A non-degenerate commuting square $(N\subseteq P, Q\subseteq M)$ is called \textbf{smooth} if $$N'\cap P_n\subseteq Q'\cap M_n \quad \forall ~n\in \mathbb{N}.$$
\end{dfn}

\newsection{Regularity and Intermediate Subalgebras in Commuting Squares}

In this section, we study non-degenerate commuting square $(N\subseteq P,Q\subseteq M)$ satisfying the condition $N'\cap P_1\subseteq Q'\cap M_1$. We investigate how this condition relates the structural properties of the horizontal inclusions $N\subseteq P$ and $Q\subseteq M$. In particular, we show that, under suitable irreducibility assumptions, regularity of $N\subseteq P$ implies the regularity of $Q\subseteq M$. We also establish a relationship between the lattice of intermediate subalgebras of $N\subseteq P$ and $Q\subseteq M$. 

\begin{thm}\label{regularity}
 Consider a non-degenerate commuting square $(N\subseteq P,Q\subseteq M)$ satisfying $N'\cap P_1\subseteq Q'\cap M_1$. If $N\subseteq P$ is a finite-index irreducible regular inclusion and $Q\subseteq M$ is irreducible, then $Q\subseteq M$ is regular.
\end{thm}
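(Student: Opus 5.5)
The plan is to produce a Pimsner--Popa basis of $M$ over $Q$ consisting of unitaries that normalize $Q$. Since $N\subseteq P$ is a finite-index regular (irreducible) subfactor, the theorem of \cite{CKP} gives an orthonormal Pimsner--Popa basis $\{u_i\}_{i=1}^m$ of $P$ over $N$ with $u_i\in\mathcal{N}_P(N)$. The commuting square is non-degenerate, so by the characterization recalled in Section~\ref{preliminaries} (Popa's Proposition 1.1.5), $\{u_i\}$ is also an orthonormal basis of $M$ over $Q$. Hence every $x\in M$ satisfies $x=\sum_i u_iE^M_Q(u_i^*x)$. It therefore suffices to show that each $u_i$ lies in $\mathcal{N}_M(Q)$. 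This would give $M=\mathcal{N}_M(Q)''$, i.e.\ regularity of $Q\subseteq M$. Irreducibility of $Q\subseteq M$ is the standing hypothesis under which regularity and the Weyl group are considered, and it also guarantees that the basic construction behaves as expected.

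The key step is to transport the normalizing property through the relative commutant hypothesis $N'\cap P_1\subseteq Q'\cap M_1$. First I need the standard fact for non-degenerate commuting squares: $P_1=\langle P,e_1\rangle$ sits inside $M_1=\langle M,e_Q\rangle$ with the \emph{same} Jones projection $e_1=e_Q$. This follows because $E^M_Q|_P=E^P_N$ by the commuting square condition, and $L^2(P)\subseteq L^2(M)$ compatibly, with non-degeneracy making the identification consistent. Fix $u=u_i\in\mathcal{N}_P(N)$. For $x\in N$,
\[
x\,ue_1u^*=u(u^*xu)e_1u^*=ue_1(u^*xu)u^*=ue_1u^*\,x,
\]
since $u^*xu\in N$ commutes with $e_1$. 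Thus $ue_1u^*\in N'\cap P_1\subseteq Q'\cap M_1$.

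Next I interpret this on $L^2(M)$. The projection $ue_Qu^*$ is the projection onto $\overline{uQ\hat 1}=u L^2(Q)$. It commutes with left multiplication by $Q$ by the previous step, and with right multiplication by $Q$, because $ue_Qu^*\in M_1=JM'J$ commutes with $JMJ\supseteq JQJ$. Hence $uL^2(Q)$ is a $Q$--$Q$ sub-bimodule, so $Qu\subseteq uL^2(Q)$, i.e.\ $u^*Qu\subseteq L^2(Q)\cap M=Q$. Applying the same argument to $u^*$, which also normalizes $N$, gives $uQu^*\subseteq Q$. Hence $uQu^*=Q$ and $u\in\mathcal{N}_M(Q)$, completing the proof by the first paragraph.

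I expect the main obstacle to be the bookkeeping in identifying $P_1$ inside $M_1$ with a common Jones projection, which is what makes the hypothesis $N'\cap P_1\subseteq Q'\cap M_1$ meaningful. I would verify it by checking that $e_Q$ restricted to $L^2(P)$ is $e_N$, and that the von Neumann algebra generated by $P$ and $e_Q$ is a copy of the basic construction of $N\subseteq P$. Both checks use $E^M_QE^M_P=E^M_N$ together with the trace-preservation and index considerations in Popa's framework. The second delicate point is making sure that $u^*$ also belongs to the basis setting, but only $u^*\in\mathcal{N}_P(N)$ is needed, and that holds trivially.
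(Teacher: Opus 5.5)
Your proof is correct, and it takes a genuinely different route from the paper. The paper argues by counting. It uses $[P:N]=|\mathcal{N}_P(N)/\mathcal{U}(N)|$ for irreducible regular inclusions, together with the Pimsner--Popa bijection between $\mathcal{N}_P(N)/\mathcal{U}(N)$ and the projections $p\in N'\cap P_1$ with $E_P(p)=[P:N]^{-1}$. It pushes these projections into $Q'\cap M_1$ via the hypothesis and then squeezes $[P:N]\le|\mathcal{N}_M(Q)/\mathcal{U}(Q)|\le[M:Q]=[P:N]$. Both irreducibility hypotheses are essential there, because the projection--normalizer bijection and the bound of the Weyl group by the index are irreducible-case facts.

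You instead prove the pointwise inclusion $\mathcal{N}_P(N)\subseteq\mathcal{N}_M(Q)$: the hypothesis puts $ue_Qu^*$ in $Q'$, so $uL^2(Q)$ is left $Q$-invariant and $u^*Qu\subseteq Q$. This is more elementary and more informative. The normalizing basis of $P$ over $N$ is itself a normalizing basis of $M$ over $Q$. Moreover, $[u]\mapsto[u]$ embeds $\mathcal{N}_P(N)/\mathcal{U}(N)$ into $\mathcal{N}_M(Q)/\mathcal{U}(Q)$, injectively because $P\cap Q=N$.

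Your argument never uses irreducibility of $Q\subseteq M$. Contrary to your remark, the identification $\langle P,e_Q\rangle\cong P_1$ needs only the commuting-square condition, non-degeneracy and finite index. It does not really need the normalizing basis either. From $\mathcal{N}_P(N)\subseteq\mathcal{N}_M(Q)$ you get $P=\mathcal{N}_P(N)''\subseteq\mathcal{N}_M(Q)''$, hence $M=\overline{\operatorname{sp}(PQ)}\subseteq\mathcal{N}_M(Q)''$. This is exactly the paper's later corollary about $\mathcal{F}=\mathcal{U}(P)\cap\mathcal{N}_M(Q)$, so irreducibility of $N\subseteq P$ is superfluous as well. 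Your argument thus proves a stronger statement than the one asked.

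One slip to fix: $M_1=\langle M,e_Q\rangle$ equals $JQ'J$, not $JM'J$ (the latter is just $M$). So $M_1$ commutes with right multiplication by $Q$, but not by $M$. This does not affect the proof, since only the left $Q$-invariance of $uL^2(Q)$ is used to get $Qu\hat{1}\subseteq uL^2(Q)$.
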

\begin{prf}
As $N\subseteq P$ is regular, it follows from \cite{BG1}[Theorem 3.12] that $[P:N]
=
\left|\frac{\mathcal{N}_P(N)}{\mathcal{U}(N)}\right|.$

Furthermore, by \cite{PP}[Proposition 1.7] and $N'\cap P_1\subseteq Q'\cap M_1$ we have, $$\left|\frac{\mathcal{N}_P(N)}{\mathcal{U}(N)}\right|
=
\operatorname{Card}\bigl(\overline{\mathcal{P}}(N'\cap P_1)\bigr)
\leq
\operatorname{Card}\bigl(\overline{\mathcal{P}}(Q'\cap M_1)\bigr)
=
\left|\frac{\mathcal{N}_M(Q)}{\mathcal{U}(Q)}\right|,$$

where $\overline{\mathcal{P}}(N'\cap P_1)$ denotes projections $p$ in $N'\cap P_1$ with $E_P(p)=[P:N]^{-1}\cdot1_P$.     Therefore,

$$[P:N]
=
\left|\frac{\mathcal{N}_P(N)}{\mathcal{U}(N)}\right|
\leq
\left|\frac{\mathcal{N}_M(Q)}{\mathcal{U}(Q)}\right|
\leq
[M:Q].$$

Since the commuting square is non-degenerate, we have $[P:N]=[M:Q].$
Hence,
\[
\left|\frac{\mathcal{N}_M(Q)}{\mathcal{U}(Q)}\right|
=
[M:Q].
\]

It follows that the inclusion $Q\subseteq M$ is regular. \qed
\end{prf}

\begin{thm}\label{smooth lattice}
Let $(N\subset P,Q\subset M)$ be a non-degenerate commuting square satisfying
$N^{\prime}\cap P_1\subseteq Q^{\prime}\cap M_1.$
 Then $\mathcal{L}(N\subseteq P)\subseteq \mathcal{L}(Q\subseteq M).$
\end{thm}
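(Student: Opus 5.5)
The plan is to realise each intermediate subalgebra of $N\subseteq P$ through its Jones projection. That projection lies in $N'\cap P_1$, so by hypothesis it lies in $Q'\cap M_1$, and we read off from it an intermediate subalgebra of $Q\subseteq M$. Concretely, for $R\in\mathcal{L}(N\subseteq P)$ we set
$$\Phi(R)=S_R:=\overline{\operatorname{sp}}(RQ)\quad(\text{strong closure in } M).$$
We will show three things: $S_R$ is a von Neumann algebra with $Q\subseteq S_R\subseteq M$; the map $R\mapsto S_R$ is injective; and the resulting embedding is the inclusion $\mathcal{L}(N\subseteq P)\subseteq\mathcal{L}(Q\subseteq M)$ of the statement.

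\textbf{Step 1: identify $e_R$ inside $M_1$.} Let $p=e_R\in N'\cap P_1$ be the Jones projection of $L^2(P)$ onto $L^2(R)$. By non-degeneracy, an orthonormal basis of $P$ over $N$ is one for $M$ over $Q$ (the Proposition above). Hence $L^2(M)=\overline{\operatorname{sp}}\,PQ\Omega$ and $P_1=\langle P,e_N\rangle$ sits in $M_1=\langle M,e_Q\rangle$ with $e_N=e_Q$. The hypothesis $N'\cap P_1\subseteq Q'\cap M_1$ then says that $p$, viewed on $L^2(M)$, commutes with the left action of $Q$. Since $p\in M_1=(JQJ)'$, it also commutes with the right action of $Q$. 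Using $x q\Omega=Jq^*J\,x\Omega$ for $x\in P$, $q\in Q$, we get $p(xq\Omega)=Jq^*J\,p x\Omega$. I would argue that $p x\Omega=E_R(x)\Omega$ for $x\in P$ still holds after the embedding; this should follow from the explicit form of the inclusion $P_1\hookrightarrow M_1$ coming from the common basis. Consequently $p$ is the projection of $L^2(M)$ onto $\overline{\operatorname{sp}}\,RQ\Omega$.

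\textbf{Step 2: $S_R$ is an algebra.} This is the main obstacle, since a priori $QR\not\subseteq\overline{\operatorname{sp}}(RQ)$. Take $q\in Q$ and $r\in R$. Because $p$ commutes with left multiplication by $q$,
$$qr\Omega=q\,p\,r\Omega=p\,(qr\Omega)\in\overline{\operatorname{sp}}\,RQ\Omega .$$
It remains to pass from the $L^2$-statement to the operator statement that $qr$ lies in the strong closure of $\operatorname{sp}(RQ)$. For this I would use that $\overline{\operatorname{sp}}\,RQ\Omega\cap M\Omega=\overline{\operatorname{sp}}^{s}(RQ)\,\Omega$. This should follow from the finite Pimsner–Popa basis $\{\lambda_i\}\subseteq P$ of $R$-type: each $x\in M$ has the form $\sum_i\lambda_i E_Q(\lambda_i^*x)$, and if $x\Omega$ lies in the range of $p$ then $x=\sum_i E_R(\cdots)q_i$ with finitely many terms. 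Given this, $QR\subseteq\overline{\operatorname{sp}}(RQ)$, and therefore
$$(RQ)(RQ)\subseteq R\,\overline{\operatorname{sp}}(RQ)\,Q\subseteq\overline{\operatorname{sp}}(RQ).$$
The set is also $*$-closed, since $(rq)^*=q^*r^*\in QR$. So $S_R$ is a von Neumann algebra with $Q\subseteq S_R\subseteq M$, and $e_{S_R}=p$.

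\textbf{Step 3: injectivity.} By the commuting square property, $E_P(rq)=rE_P(q)=rE_N(q)\in R$. Hence $E_P(S_R)=R$, so $S_R\cap P=R$ and $R$ is recovered from $S_R$. Thus $R\mapsto S_R$ is an injective, order-preserving map $\mathcal{L}(N\subseteq P)\to\mathcal{L}(Q\subseteq M)$; equivalently, $R\mapsto e_R$ is the restriction of the identification of biprojections in $N'\cap P_1$ with biprojections in $Q'\cap M_1$. This yields the stated inclusion of lattices.
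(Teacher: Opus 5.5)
Your argument is correct, and it yields the same embedding as the paper by a longer route.

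\textbf{The paper's route.} It sets $\phi(K)=\{e_K^P\}'\cap M$. A relative commutant is automatically a von Neumann algebra, and the hypothesis $e_K^P\in Q'\cap M_1$ gives $Q\subseteq\phi(K)$ at once. The elementary identity $\{e_K^P\}'\cap P=K$ then shows that $L\mapsto L\cap P$ is a left inverse, so $\phi$ is injective. The paper checks properness by hand; in your version it follows from injectivity together with $S_N=Q$ and $S_P=M$.

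\textbf{Why the maps agree.} You prove that $S_R$ is a von Neumann algebra with $e_{S_R}=p$. Hence $\{p\}'\cap M=S_R$, so your $\Phi$ is exactly the paper's $\phi$.

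\textbf{What your route costs.} The paper never needs your Step 2, nor an explicit location of $e_R$ inside $M_1$; you need both. Your hedged claim in Step 1 is true. Take $\{\mu_j\}$ a basis of $R$ over $N$. Under $xe_Ny\mapsto xe_Qy$, the projection $e_R=\sum_j\mu_je_N\mu_j^*$ goes to $\sum_j\mu_je_Q\mu_j^*$. For $x\in P$ this sends $x\Omega$ to $\sum_j\mu_jE_N(\mu_j^*x)\Omega=E_R(x)\Omega$, because $E_Q|_P=E_N$.

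The same formula gives the cleanest form of your $L^2$-to-operator step. If $x\in M$ and $x\Omega\in\operatorname{ran}p$, then $x=\sum_j\mu_jE_Q(\mu_j^*x)$.

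\textbf{What your route buys.} You get a concrete description of the image that the commutant definition hides. Namely $\phi(R)=\operatorname{sp}(RQ)=\sum_j\mu_jQ$, which is already strongly closed as the range of the normal idempotent $x\mapsto\sum_j\mu_jE_Q(\mu_j^*x)$. Consequently $(N\subseteq R,\,Q\subseteq\phi(R))$ is itself a non-degenerate commuting square whose Jones projection is $e_R$. Monotonicity of $R\mapsto\phi(R)$ is also evident.
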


\begin{proof}
Define a map
$$
\phi:\mathcal{L}(N\subseteq P)\longrightarrow \mathcal{L}(Q\subseteq M),
\qquad
\phi(K)=\{e_K^P\}'\cap M.
$$
Since $e_K^P\in N'\cap P_1\subseteq Q'\cap M_1$, we have $Q\subseteq \phi(K)\subseteq M.$ Hence $\phi$ is well defined. We now show that $\phi(K)$ is a proper intermediate von Neumann subalgebra, whenever $K$ is proper.\\
\textbf{Case I.} Suppose $\phi(K)=Q$. Then $K=\{e_K^P\}'\cap P\subseteq \{e_K^P\}'\cap M=Q.$ Since $K\subseteq P$, it follows that $K\subseteq P\cap Q=N,$
contradicting the assumption that $K$ is a proper intermediate subalgebra of $N\subseteq P$.\\
\textbf{Case II.} Suppose $\phi(K)=M$. Then$\{e_K^P\}'\cap M=M,$ so $e_K^P\in M'\subseteq P'$. Hence $P\subseteq \{e_K^P\}'\cap P=K,$
which implies $K=P$, again a contradiction. Therefore, $Q\subsetneq \phi(K)\subsetneq M.$ Next, define
$$
\psi:\mathcal{L}(Q\subseteq M)\longrightarrow \mathcal{L}(N\subseteq P),
\qquad
\psi(L)=L\cap P.
$$
Clearly, $N\subseteq \psi(L)\subseteq P,$ so $\psi$ is well defined. Moreover, $\psi(\phi(K))=(\{e_K^P\}'\cap M)\cap P=\{e_K^P\}'\cap P=K.$
Hence $\psi\circ\phi=\mathrm{id}_{\mathcal{L}(N\subseteq P)}$, and therefore $\phi$ is injective. Thus, $\mathcal{L}(N\subseteq P)\subseteq \mathcal{L}(Q\subseteq M).$
\end{proof}

\begin{crlre}\label{smooth}
 Let $(N,K,L,M)$ be a non-degenerate smooth commuting square.
 Then $\mathcal{L}(N\subseteq K)$ is a subset of $\mathcal{L}(L\subseteq M)$. 
\end{crlre}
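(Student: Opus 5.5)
The corollary follows directly from Theorem \ref{smooth lattice}, once the notation is matched. In the quadruple $(N,K,L,M)$ I read the commuting square in the paper's convention $(N\subseteq P, Q\subseteq M)$ with $P=K$ and $Q=L$. The lower inclusion is then $N\subseteq K$ and the upper one is $L\subseteq M$, with $N = K\cap L$ and $E^M_K E^M_L = E^M_N$. The plan is to check that smoothness supplies the single hypothesis of Theorem \ref{smooth lattice}, and then apply that theorem.

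By Definition \ref{definition smooth}, smoothness says $N'\cap K_n\subseteq L'\cap M_n$ for every $n\in\mathbb{N}$. Taking $n=1$ gives $N'\cap K_1\subseteq L'\cap M_1$. This is exactly the condition $N'\cap P_1\subseteq Q'\cap M_1$ required in Theorem \ref{smooth lattice}.

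One point needs care: the basic constructions $K_1=\langle K,e_N\rangle$ and $M_1=\langle M,e_L\rangle$ must be viewed inside a common algebra, so that the inclusion of relative commutants makes sense. Non-degeneracy handles this. Since an orthonormal basis of $K$ over $N$ is also one for $M$ over $L$ (the Proposition quoted from \cite{Popa}), the Jones projection $e^K_N$ can be identified with $e^M_L$, because $E^M_L|_K=E^K_N$. This gives a natural embedding $K_1\subseteq M_1$. Smoothness in Definition \ref{definition smooth} is already formulated with respect to this identification, so no further work should be needed. This identification is the only step where I expect any subtlety, and it is implicit in the statement of the definition.

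With the hypothesis verified, Theorem \ref{smooth lattice} supplies the injective map $\phi(R)=\{e^K_R\}'\cap M$ from $\mathcal{L}(N\subseteq K)$ into $\mathcal{L}(L\subseteq M)$, with left inverse $S\mapsto S\cap K$. This exhibits $\mathcal{L}(N\subseteq K)$ as a subset of $\mathcal{L}(L\subseteq M)$, which is the claim.
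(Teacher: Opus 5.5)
Your proposal is correct and matches the paper's argument: the paper proves the corollary in one line by taking $n=1$ in Definition \ref{definition smooth} to get $N'\cap K_1\subseteq L'\cap M_1$, and then invoking Theorem \ref{smooth lattice}. Your extra remark about viewing $K_1$ inside $M_1$ via $e^K_N\mapsto e^M_L$ only spells out the identification already implicit in the definition of smoothness, so nothing further is needed.
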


\begin{prf}
    Follows from Theorem \ref{smooth lattice} and \Cref{definition smooth}.
\end{prf}

\begin{crlre}
Let $(N\subseteq P,Q\subseteq M)$ be a non-degenerate commuting square satisfying the hypotheses of Theorem~\ref{regularity}. Then $\mathcal{L}(W_P^N)\subseteq \mathcal{L}(W_M^Q),$ where $\mathcal{L}(W_P^N)$ and $\mathcal{L}(W_M^Q)$ denote the subgroup lattices of the Weyl groups of the inclusions $N\subseteq P$ and $Q\subseteq M$, respectively.
\end{crlre}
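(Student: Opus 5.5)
The plan is to transport the problem to intermediate subfactor lattices, using the Galois correspondence for irreducible regular inclusions. Under the hypotheses of Theorem~\ref{regularity}, both $N\subseteq P$ and $Q\subseteq M$ are irreducible, finite index and regular; $Q\subseteq M$ is regular by Theorem~\ref{regularity} itself, and $[M:Q]=[P:N]<\infty$ by non-degeneracy. For an irreducible regular finite-index inclusion $A\subseteq B$ with Weyl group $W=\mathcal{N}_B(A)/\mathcal{U}(A)$, the inclusion is a (cocycle) crossed product $A\subseteq A\rtimes_{\alpha,\omega} W$ with $\alpha$ outer. The correspondence $H\mapsto A\rtimes H=\bigl(A\cup\{u_h: h\in H\}\bigr)''$ is then a lattice isomorphism $\mathcal{L}(W)\cong\mathcal{L}(A\subseteq B)$. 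This is the standard Galois correspondence for outer actions of finite groups (Choda / Izumi--Longo--Popa), which I would cite.

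The key steps are as follows.
\begin{enumerate}
\item Identify $\mathcal{L}(W_P^N)\cong\mathcal{L}(N\subseteq P)$ and $\mathcal{L}(W_M^Q)\cong\mathcal{L}(Q\subseteq M)$ via the correspondence above.
\item Apply Theorem~\ref{smooth lattice}: the map $\phi(K)=\{e_K^P\}'\cap M$ gives an injection $\mathcal{L}(N\subseteq P)\hookrightarrow\mathcal{L}(Q\subseteq M)$. It has left inverse $\psi(L)=L\cap P$.
\item Compose to get an injective map $\mathcal{L}(W_P^N)\to\mathcal{L}(W_M^Q)$. Since both Weyl groups have order $[P:N]=[M:Q]$ (using \cite{BG1} as in the proof of Theorem~\ref{regularity}), both lattices are finite of the same cardinality, so the injection is in fact a bijection.
\end{enumerate}

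To make the statement more concrete, I would then identify the induced map at the group level. For $H\le W_P^N$ and $K=N\rtimes H$, I expect $\phi(K)=Q\vee K$. This is the algebra generated by $Q$ and representatives $u_h\in\mathcal{N}_P(N)$, $h\in H$.

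To justify this, first note that $Q\vee K\subseteq\phi(K)$. Indeed, $Q\subseteq\phi(K)$ by the hypothesis $N'\cap P_1\subseteq Q'\cap M_1$, and $K\subseteq\phi(K)$ trivially. For the reverse inclusion I would compare indices. By non-degeneracy, an orthonormal basis $\{u_h\}_{h\in H}$ of $K$ over $N$ spans $Q\vee K$ over $Q$, so $[Q\vee K:Q]\le |H|=[K:N]$. For the other side, I would use that $e_K^P$ commutes with $Q$ and has trace $[P:K]^{-1}$ in $M_1$ relative to $e_Q^M$. This should give $[M:\phi(K)]=[P:K]$, and hence the two algebras coincide.

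Next, each $u_h$ normalizes $N$, and I would check that it also normalizes $Q$. The commuting-square condition gives that $E_Q(u_h x u_h^*)$ behaves correctly; alternatively, it follows from $\{u_h\}$ being a basis lying in $\mathcal{N}_M(Q)$, as in the argument via \cite{PP}. This yields a homomorphism $W_P^N\to W_M^Q$. It is injective because a basis of $P/N$ remains a basis of $M/Q$, so distinct cosets stay orthogonal. By cardinality it is then an isomorphism, and it sends $H$ to the subgroup corresponding to $\phi(K)$.

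The main obstacle is the claim that the $u_h$ normalize $Q$ and give well-defined, distinct classes in $W_M^Q$. If this proves delicate, I would fall back on the purely lattice-theoretic statement from steps 1--3, which already gives the inclusion $\mathcal{L}(W_P^N)\subseteq\mathcal{L}(W_M^Q)$ up to the canonical identifications.
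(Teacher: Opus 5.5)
Your steps 1--2, composed as in step 3, are the paper's proof. The Galois correspondence for irreducible regular finite-index inclusions identifies each Weyl-group subgroup lattice with the corresponding intermediate-subfactor lattice. The injection $\phi(K)=\{e_K^P\}'\cap M$ of Theorem~\ref{smooth lattice} then gives $\mathcal{L}(W_P^N)\hookrightarrow\mathcal{L}(W_M^Q)$. That already proves the statement.

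The extra bijectivity claim in step 3 is a non sequitur as justified. Groups of the same order need not have the same number of subgroups: $\mathbb{Z}_4$ has three, while $\mathbb{Z}_2\times\mathbb{Z}_2$ has five. So $|W_P^N|=|W_M^Q|$ alone does not make the lattice injection onto.

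Your group-level argument is the right way to get bijectivity, but the step you flag as the obstacle needs a real argument. Neither justification you give works. The commuting-square condition says nothing about $\operatorname{Ad}u_h$ on $Q$. Appealing to the basis already lying in $\mathcal{N}_M(Q)$ is circular. The missing ingredient is the relative-commutant hypothesis:
\begin{enumerate}
\item For $u\in\mathcal{N}_P(N)$ one has $n\,ue_Nu^*=u(u^*nu)e_Nu^*=ue_Nu^*\,n$ for $n\in N$, so $ue_Nu^*\in N'\cap P_1\subseteq Q'\cap M_1$.
\item Since $e_N=e_Q$ in $M_1$, conjugating $q\,ue_Qu^*=ue_Qu^*\,q$ by $u$ gives $u^*qu\in\{e_Q\}'\cap M=Q$. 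Applying the same to $u^*$ shows $u\in\mathcal{N}_M(Q)$.
\item The map $[u]\mapsto[u]$ from $W_P^N$ to $W_M^Q$ is well defined and a homomorphism. It is injective because a $u\in P\cap\mathcal{U}(Q)$ lies in $P\cap Q=N$.
\item Both groups have order $[P:N]=[M:Q]$, so this injection is an isomorphism.
\end{enumerate}
This yields $W_P^N\cong W_M^Q$ and hence the lattice equality you wanted, which is stronger than what the paper states. It also confirms $\phi(N\rtimes H)=Q\vee(N\rtimes H)$ directly: $e_K^P=\sum_h u_he_Qu_h^*$ is the Jones projection of the algebra $\operatorname{span}\{u_hQ\}$.
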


\begin{proof}
By Theorem~\ref{regularity}, the inclusion $Q\subseteq M$ is regular. Since both $N\subseteq P$ and $Q\subseteq M$ are irreducible regular inclusions of finite index, the Galois correspondence \cite{BG1} yields lattice isomorphisms
\[
\mathcal{L}(N\subseteq P)\cong \mathcal{L}(W_P^N)
\quad\text{and}\quad
\mathcal{L}(Q\subseteq M)\cong \mathcal{L}(W_M^Q).
\]
The conclusion now follows from Theorem~\ref{smooth lattice}, which provides an injective map $\mathcal{L}(N\subseteq P)\longrightarrow \mathcal{L}(Q\subseteq M).$
\end{proof}

\newsection{Irreducible Commuting Squares and Crossed-Product Inclusions}

In this section, we study the regularity and lattice structure of non-degenerate commuting squares under the irreducibility condition $N'\cap M=\mathbb{C}$. We first analyze the structure of normalizing unitaries of the upper inclusion and use this to obtain relations between the corresponding Weyl groups and a criteria for regularity. We then apply these results to crossed-product inclusions arising from $W^*$-dynamical extension systems. In particular, we characterize the regularity of the crossed-product inclusion in terms of the relative eigenbasis property and examine its connection with relative weak mixing. Finally, we discuss the behavior of intermediate subfactors under strongly outer actions.
\medskip

\begin{thm}\label{structure of unitaries}
Consider a non-degenerate commuting square $(N \subseteq P,Q\subseteq M)$,
 where $N\subseteq M$ is an irreducible inclusion and $N\subseteq P$ is a finite-index regular subfactor. Then, for any $w \in \mathcal{N}_{M}(Q),$
there exist $u \in \mathcal{U}(P)$ and $v \in \mathcal{U}(Q)$ such that $w = uv$.
\end{thm}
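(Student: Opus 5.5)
The plan is to detect $w$ through the Jones projection $e_Q$ of the basic construction $M\subseteq M_1=\langle M,e_Q\rangle$. Since $w$ normalizes $Q$, the projection $w e_Q w^*$ is again the Jones projection of $Q$, so it lies in $Q'\cap M_1\subseteq N'\cap M_1$. The goal is to show that $N'\cap M_1$ is the finite-dimensional abelian algebra spanned by $\{u_g e_Q u_g^*\}$, where $\{u_g\}$ is a normalizing Pimsner--Popa basis of $P$ over $N$. A trace argument then forces $w e_Q w^*=u_g e_Q u_g^*$ for a single $g$, and hence $u_g^*w\in Q$.

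\textbf{Step 1 (basis).} Since $N\subseteq P$ is regular of finite index, the theorem of \cite{CKP} gives an orthonormal Pimsner--Popa basis $\{u_g\}_{g}$ of $P$ over $N$ with $u_g\in\mathcal{N}_P(N)$. We may take one $u_g$ for each class of the Weyl group $\mathcal{N}_P(N)/\mathcal{U}(N)$, which has cardinality $[P:N]$.

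By non-degeneracy and the Proposition characterizing non-degenerate commuting squares, $\{u_g\}$ is also an orthonormal basis of $M$ over $Q$. Therefore
\[
\sum_g u_g e_Q u_g^*=1 .
\]
Each $u_g e_Q u_g^*$ is a projection with $E_M(u_g e_Q u_g^*)=[M:Q]^{-1}$, and they are mutually orthogonal. Moreover, since $u_g$ normalizes $N$ and $e_Q$ commutes with $N\subseteq Q$, each $u_g e_Q u_g^*$ lies in $N'\cap M_1$.

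\textbf{Step 2 (computing $N'\cap M_1$; main obstacle).} Every $x\in M_1$ can be written as
\[
x=\sum_{g,h}u_g q_{gh}e_Q u_h^*, \qquad q_{gh}\in Q .
\]
Here the coefficients are uniquely determined, namely $q_{gh}=[M:Q]\,E_M(e_Q u_g^* x u_h e_Q)$-type expressions, using $e_QQ=Qe_Q$ and the basis property.

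Put $\alpha_g(n)=u_g^*nu_g\in N$. If $x$ commutes with $n\in N$, then uniqueness of the coefficients gives
\[
\alpha_g(n)\,q_{gh}=q_{gh}\,\alpha_h(n).
\]
Conjugating, this says exactly that $u_g q_{gh} u_h^*\in N'\cap M=\mathbb{C}$, so $q_{gh}=c_{gh}\,u_g^*u_h$ for some scalar $c_{gh}$.

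For $g\neq h$ we have $u_g^*u_h\in P$. If $c_{gh}\neq0$, then $u_g^*u_h\in Q\cap P$. But by the commuting square, $E_Q(u_g^*u_h)=E_N(u_g^*u_h)=0$ by orthonormality, so $u_g^*u_h$ cannot lie in $Q$. Hence $c_{gh}=0$ for $g\neq h$. This yields
\[
N'\cap M_1=\operatorname{span}\{u_g e_Q u_g^*\},
\]
an abelian algebra whose minimal projections are the $u_g e_Q u_g^*$.

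I expect the care to be needed here: one must justify the unique expansion of elements of $M_1$ (finite index, so $M_1=\operatorname{span} Me_QM$) and check the intertwining computation precisely.

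\textbf{Step 3 (conclusion).} The projection $p=w e_Q w^*$ lies in $N'\cap M_1$, so by Step 2 it is a sum of some of the minimal projections $u_g e_Q u_g^*$. Since $E_M(p)=[M:Q]^{-1}$, and each minimal projection has the same conditional expectation, exactly one term occurs: $w e_Q w^*=u_g e_Q u_g^*$.

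Then $v:=u_g^*w$ is a unitary in $M$ commuting with $e_Q$. Hence $v\in\{e_Q\}'\cap M=Q$, and so $w=u_g v$ with $u_g\in\mathcal{U}(P)$ and $v\in\mathcal{U}(Q)$, as required.
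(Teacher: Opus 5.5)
Your argument is correct, but it takes a different route from the paper.

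\textbf{The paper's route.} The paper never leaves $M$. It expands $w=\sum_i\zeta_i E_Q(\zeta_i^*w)$ in the normalizing basis and observes that $E_Q(\zeta_i^*w)\,w^*\zeta_i$ commutes with $N$. By irreducibility this element is a scalar $\alpha_i$, so $E_Q(\zeta_i^*w)=\alpha_i\zeta_i^*w$. The factorization is then assembled from these coefficients through some manipulation of the products $w_kw_l^*$. Note that as soon as some $\alpha_i\neq 0$, one already has $\zeta_i^*w\in Q$.

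\textbf{Your route.} You use the same ``an intertwiner in $N'\cap M$ is a scalar'' trick, but one level up. You apply it to the matrix coefficients $u_g q_{gh} u_h^*$ of an arbitrary element of $N'\cap M_1$. This computes $N'\cap M_1=\operatorname{span}\{u_g e_Q u_g^*\}\cong\mathbb{C}^{[P:N]}$. You then finish with a Pimsner--Popa-style trace count on $w e_Q w^*$.

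\textbf{What each approach buys.} The paper's approach is more elementary. It needs neither the basic construction nor the uniqueness of the expansion in $M_1$; you do verify that uniqueness correctly via $q_{gh}e_Q=e_Q u_g^* x u_h e_Q$. Your approach yields more information:
\begin{enumerate}
\item $Q'\cap M_1$ sits inside an abelian algebra whose minimal projections are indexed by the basis of $P$ over $N$.
\item The unitary $u$ can be taken to be a single basis element $u_g\in\mathcal{N}_P(N)$. This makes Corollary~\ref{weylcrlre} essentially immediate.
\item The argument fits naturally with the projection counting used in Theorem~\ref{regularity}.
\end{enumerate}

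\textbf{A wording point.} $w e_Q w^*$ is not equal to $e_Q$ unless $w\in Q$; it is only ``a'' Jones projection for $Q$, in the sense that it implements $E_Q$. What you actually use is correct: that it lies in $Q'\cap M_1$ and that $E_M(w e_Q w^*)=[M:Q]^{-1}$.
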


\begin{prf}
 Let $\{\zeta_i\}_{i=1}^n \subseteq \mathcal{N}_P(N)$ be a unitary orthonormal basis for the inclusion $N \subseteq P$. Then $\{\zeta_i\}_{i=1}^n$ also forms a unitary orthonormal basis for $Q\subseteq M$. Thus, for $w \in  M $, we may write $w = \sum_{i=1}^n \zeta_i w_i,$ where $ w_i := E_{Q}(\zeta_i^* w).$

Note that $\zeta_i x \zeta_i^* \in N \subseteq Q$ for $x \in N$, as  $\zeta_i \in \mathcal{N}_P(N)$. Moreover, as $w \in \mathcal{N}_{M}(Q)$, it follows that $w^* \zeta_i x \zeta_i^* w \in Q$.
Applying the conditional expectation $E_{Q }$, we obtain
\[
x E_{Q}(\zeta_i^* w) = E_{Q}(\zeta_i^* w)\, w^* \zeta_i x \zeta_i^* w.
\]
This shows that $E_{Q}(\zeta_i^* w)\, w^* \zeta_i \in N' \cap  M.$
By irreducibility, we have $N' \cap M = \mathbb{C}$, hence there exists $\alpha_i \in \mathbb{C}$ such that $w_i = E_{Q}(\zeta_i^* w) = \alpha_i \zeta_i^* w.$\\
For $1 \leq k,l \leq n$, we compute
$w_k w_l^* = \alpha_k \overline{\alpha}_l \, \zeta_k w w^* \zeta_l^* = \alpha_k \overline{\alpha}_l \, \zeta_k \zeta_l^*$ and
$w_k^* w_l = \overline{\alpha}_k \alpha_l \, w^* \zeta_k \zeta_l^* w.$
In particular, for $k=l$, we obtain $w_k w_k^* = |\alpha_k|^2 = w_k^* w_k.$ Thus, whenever $w_l \neq 0$, the element $v_l := \frac{w_l}{|\alpha_l|}$ is a unitary in $Q$, and $w_l = |\alpha_l| v_l$.

\noindent
Notice that whenever $w_k$ and $w_l$ are non zero, comparing the expressions for $w_k w_l^*$, we obtain $|\alpha_k||\alpha_l|\, v_k v_l^* = \alpha_k \overline{\alpha}_l \, \zeta_k \zeta_l^*,$
which implies
\[
v_k = \frac{\alpha_k \overline{\alpha}_l}{|\alpha_k \alpha_l|} \, \zeta_k \zeta_l^* v_l =: m_{kl} v_l,
\]
where $m_{kl} \in \mathcal{U}(P)$. Fixing $j$, such that $w_j \neq 0$, we conclude that
\[
w = \sum_{i=1}^n \zeta_i w_i = \left( \sum_{i=1}^n \zeta_i |\alpha_i| m_{ij} \right) v_j.
\]
Let us denote $u := \sum_{i=1}^n \zeta_i |\alpha_i| m_{ij} \in P $ and $v := v_j \in \mathcal{U}(Q).$ Since $w$ is unitary, it follows that $u$ is also unitary. Hence, we get the required factorization, $w=uv$.\qed
\end{prf}

\begin{crlre}\label{weylcrlre}
Let $(N\subseteq P,Q\subseteq M)$, with the hypothesis mentioned in Theorem \ref{structure of unitaries}. Then, the Weyl group of $Q\subseteq M$ is a subgroup of the  Weyl group of $N\subseteq P$.
\end{crlre}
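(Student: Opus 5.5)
The plan is to build an injective group homomorphism
$$\Phi:\frac{\mathcal{N}_M(Q)}{\mathcal{U}(Q)}\longrightarrow \frac{\mathcal{N}_P(N)}{\mathcal{U}(N)},\qquad \Phi([w])=[u],$$
where $w=uv$ is a factorization with $u\in\mathcal{U}(P)$ and $v\in\mathcal{U}(Q)$, as provided by Theorem \ref{structure of unitaries}.

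First I record the elementary fact $P\cap Q=N$. This follows from the commuting square condition: for $x\in P\cap Q$ we have $x=E_PE_Q(x)=E_N(x)\in N$.

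Next I check that the unitary $u$ lies in $\mathcal{N}_P(N)$. Since $u=wv^*$, and both $w$ and $v$ normalize $Q$, the unitary $u$ satisfies $uQu^*=Q$. Because $u\in P$, we have $uPu^*=P$. Hence
$$uNu^*=u(P\cap Q)u^*=P\cap Q=N,$$
so $u\in\mathcal{N}_P(N)$.

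Then I verify that $\Phi$ is well defined, which is the step requiring the most care.
\begin{itemize}
\item \emph{Independence of the factorization.} Suppose $w=uv=u'v'$ are two factorizations. Then $u'^*u=v'v^*\in P\cap Q=N$, so this element is a unitary of $N$, and $[u]=[u']$.
\item \emph{Independence of the representative.} Replace $w$ by $wq$ with $q\in\mathcal{U}(Q)$. Then $wq=u(vq)$ is again a factorization with the same $u$, so the class does not change.
\end{itemize}

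For the homomorphism property, take $w_1=u_1v_1$ and $w_2=u_2v_2$. Then
$$w_1w_2=u_1u_2\,(u_2^*v_1u_2)\,v_2 .$$
Since $u_2$ normalizes $Q$, the element $u_2^*v_1u_2$ lies in $\mathcal{U}(Q)$. So this is a factorization of $w_1w_2$ with $P$-part $u_1u_2$, and therefore $\Phi([w_1][w_2])=[u_1][u_2]$.

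Finally, injectivity: if $\Phi([w])$ is trivial, then $u\in\mathcal{U}(N)\subseteq\mathcal{U}(Q)$. Hence $w=uv\in\mathcal{U}(Q)$, so $[w]$ is trivial.

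The main subtlety is making sure that every factorization yields a normalizing $u$ and that the class is independent of the choices made; both points reduce to the identity $P\cap Q=N$.
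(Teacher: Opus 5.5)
Your proof is correct and follows essentially the same route as the paper. In both, Theorem \ref{structure of unitaries} is used to represent each class of $\mathcal{N}_M(Q)/\mathcal{U}(Q)$ by a unitary in $P$, which is sent to its class in $\mathcal{N}_P(N)/\mathcal{U}(N)$; one then checks well-definedness, multiplicativity and injectivity. Your use of $P\cap Q=N$ is slightly cleaner than the paper's. It explicitly shows that the $P$-part $u$ normalizes $N$, a point the paper leaves implicit. It also gives well-definedness directly from $u'^*u\in P\cap Q$, whereas the paper argues by contradiction using $E_N(u_1^*u_2)=0$ for representatives of distinct cosets.
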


\begin{prf}
Let $W$ and $W'$ denote the Weyl groups $\frac{\mathcal{N}_P(N)}{\mathcal{U}(N)}$ and $\frac{\mathcal{N}_M(Q)}{\mathcal{U}(Q)}$ respectively.
Define a map $$\phi: W'\rightarrow W, \qquad
\phi( [u]_{W'})=[u]_W.$$
We first show that $\phi$ is well defined. Let $[u_1]_{W'}= [u_2]_{W'}$. If $[u_1]_{W} \neq [u_2]_{W}$, then $E_N(u_1^*u_2)=0$ implying $E_Q(u_1^*u_2)=0$, which is a contradiction. Thus, the map $\phi$ is well-defined.
This map is a homomorphism as, $$\phi([u]_{W'}[v]_{W'})=\phi([uv]_{W'})=[uv]_{W}=[u]_W[v]_W=\phi([u]_{W'})\phi([v]_{W'}).$$
We  claim that this map is injective. Let $\phi([u_1]_{W'})=\phi([u_2]_{W'})$. This implies $[u_1]_W=[u_2]_W$. Thus $u_2^*u_1\in \mathcal{U}(N)$ so $u_1=u_2v$ for some $v\in \mathcal{U}(N)$. So, $[u_1]_{W'}=[u_2v]_{W'}=[u_2]_{W'}$. Hence the map is injective homomorphism and so $W'$ is a subgroup of $W$. \qed
 
\end{prf}

The following criterion ensures the regularity of the upper inclusion.

\begin{crlre}
Let $\mathcal{F}= \mathcal{U}(P)\cap \mathcal{N}_M(Q)$ denote a subgroup of $\mathcal{N}_{M}(Q)$.
If $\mathcal{F}''=P$, then the inclusion $Q\subseteq M$ is regular.
\end{crlre}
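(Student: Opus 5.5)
Since $\mathcal{F}=\mathcal{U}(P)\cap\mathcal{N}_M(Q)\subseteq\mathcal{N}_M(Q)$, we have $\mathcal{F}''\subseteq\mathcal{N}_M(Q)''$. The plan is to show that $\mathcal{N}_M(Q)''$ contains both $P$ and $Q$, and then use non-degeneracy.

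\textbf{Containing $P$ and $Q$.} By hypothesis $\mathcal{F}''=P$, so $P\subseteq\mathcal{N}_M(Q)''$. Every unitary of $Q$ normalizes $Q$, so $\mathcal{U}(Q)\subseteq\mathcal{N}_M(Q)$. Since a von Neumann algebra is the span of its unitaries, this gives $Q=\mathcal{U}(Q)''\subseteq\mathcal{N}_M(Q)''$.

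\textbf{Using non-degeneracy.} Because $\mathcal{N}_M(Q)''$ is an algebra containing $P$ and $Q$, it contains every product $pq$ with $p\in P$ and $q\in Q$, hence all of $\operatorname{sp}(PQ)$. It is strongly closed, so it also contains $\overline{\operatorname{sp}(PQ)}$. Non-degeneracy of the commuting square says this closure is $M$. Therefore $\mathcal{N}_M(Q)''=M$, which is exactly regularity of $Q\subseteq M$.

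Alternatively, one can argue with bases. The hypothesis should supply a Pimsner--Popa basis of $P$ over $N$ consisting of elements of $\mathcal{F}$. By the earlier proposition (Popa's characterization of non-degeneracy), such a basis is also a basis of $M$ over $Q$, and it lies in $\mathcal{N}_M(Q)$. The theorem of \cite{CKP} then gives regularity.

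I do not expect any genuine obstacle. The only point needing care is the meaning of ``regular'': it must be read as $\mathcal{N}_M(Q)''=M$, which is what the argument above establishes. Finite index and irreducibility are not needed for this direction, so the hypotheses of Theorem \ref{structure of unitaries} play no role here.
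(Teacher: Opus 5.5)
Your main argument is correct and is exactly the paper's proof: $P=\mathcal{F}''\subseteq\mathcal{N}_M(Q)''$ and $Q\subseteq\mathcal{N}_M(Q)''$, so non-degeneracy, $\overline{\operatorname{sp}(PQ)}=M$, forces $\mathcal{N}_M(Q)''=M$. The alternative basis route is unnecessary and, as written, unjustified, since the hypothesis $\mathcal{F}''=P$ does not by itself supply a Pimsner--Popa basis of $P$ over $N$ lying in $\mathcal{F}$; you should drop it.
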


\begin{prf} By the definition of $\mathcal{F}$, we have $P=\mathcal{F}''\subseteq \mathcal{N}_M(Q)''.$ Furthermore, $Q\subseteq \mathcal{N}_M(Q)''$. Consequently, $\overline{\operatorname{sp}(PQ)}=M$, we obtain $ M=\overline{\operatorname{sp}(PQ)} \subseteq \mathcal{N}_M(Q)''.$ It follows that $\mathcal{N}_M(Q)''=M$. Hence, the inclusion $Q\subseteq M$ is regular. \qed 
\end{prf}

\subsection{Dynamical Properties of Crossed-Product Inclusions}

Prior to investigating the interplay between group dynamics and von Neumann algebraic inclusions, we summarize several key definitions regarding $W^*$-dynamical systems for the reader's convenience.

\begin{dfn} \cite{Popa2}
    A 1-cocycle for the action $\sigma:G\rightarrow Aut(N)$ is a map $\lambda:G\rightarrow \mathcal{U}(N)$ satisfying
    $$\lambda(gh)=\lambda(g)\sigma_g(\lambda(h))\quad \forall ~g,h\in G.$$
    \end{dfn}

\begin{dfn} \cite{Pac2}
    A 1-cocycle $\lambda:G\rightarrow \mathcal{U}(N)$ is called the relative eigenvalue of $(N\subseteq M,G,\sigma,\tau)$ with relative eigenvector $u\in \mathcal{U}(M)$ if 
    $$\sigma_g(u)=u\lambda(g)\quad \forall~g\in G.$$

\end{dfn}

\begin{thm} \label{Analytical Property}

Let $(N\subseteq M,G,\sigma,\tau)$ be a $W^*$-dynamical extension system. Then for $u\in\mathcal{U}(M)$, $u\in\mathcal{N}_{M\rtimes G}(N\rtimes G)$ if and only if $u\in\mathcal{N}_M(N)$ and $u$ is a relative eigenvector of the action.
\end{thm}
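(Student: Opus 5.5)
The plan is to argue directly in the crossed product $M\rtimes G$, with canonical unitaries $\lambda_g$ implementing $\sigma$, so that $\lambda_g x\lambda_g^*=\sigma_g(x)$ for $x\in M$ and $N\rtimes G$ is generated by $N$ and $\{\lambda_g\}_{g\in G}$. The whole argument reduces to how $u\in\mathcal U(M)$ conjugates these two kinds of generators. Throughout I use that the relative-eigenvalue relation $\sigma_g(u)=u\lambda(g)$ says exactly $u^*\lambda_g u\lambda_g^*=u^*\sigma_g(u)\in\mathcal U(N)$, i.e.\ $u^*\lambda_g u=\lambda(g)\lambda_g$.

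For sufficiency, assume $u\in\mathcal N_M(N)$ is a relative eigenvector with cocycle $\lambda$. Then $u^*Nu=N$, and $u^*\lambda_g u=u^*\sigma_g(u)\lambda_g=\lambda(g)\lambda_g\in N\rtimes G$. Hence $u^*(N\rtimes G)u\subseteq N\rtimes G$. For the reverse inclusion, apply the same computation to $u^*$. This is legitimate because $u^*\in\mathcal N_M(N)$, and $\sigma_g(u^*)=\lambda(g)^*u^*=u^*\bigl(u\lambda(g)^*u^*\bigr)$ with $u\lambda(g)^*u^*\in\mathcal U(N)$. So $u^*$ also satisfies $\sigma_g(u^*)\in u^*\mathcal U(N)$, and that membership is all the computation uses. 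Therefore $u\in\mathcal N_{M\rtimes G}(N\rtimes G)$.

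For necessity, let $u\in\mathcal U(M)$ normalize $N\rtimes G$. Since $u\in M$, we get $u^*Nu\subseteq (N\rtimes G)\cap M$. The key step is $(N\rtimes G)\cap M=N$, which I would prove using the expectation $E_M$ onto $M$ along the Fourier expansion. An element $y\in N\rtimes G$ has Fourier coefficients in $N$, and if $y\in M$ then $y=E_M(y)$ equals its $\lambda_e$-coefficient, which lies in $N$. Alternatively, the square $(N\subseteq M, N\rtimes G\subseteq M\rtimes G)$ is a commuting square, so $E_M E_{N\rtimes G}=E_N$. Together with the same argument for $u^*$, this gives $u\in\mathcal N_M(N)$.

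Next, define $\lambda(g):=u^*\sigma_g(u)=u^*\lambda_g u\lambda_g^*$. This is a unitary in $M$, and it lies in $N\rtimes G$ because $u^*\lambda_g u\in N\rtimes G$. By the previous step it therefore lies in $N$. The cocycle identity is routine: $\lambda(gh)=u^*\sigma_g(u\,u^*\sigma_h(u))=\lambda(g)\sigma_g(\lambda(h))$. So $\sigma_g(u)=u\lambda(g)$, and $u$ is a relative eigenvector. The main point needing care is the identification $(N\rtimes G)\cap M=N$ via the commuting-square expectation. Everything else is direct algebra.
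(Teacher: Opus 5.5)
Your proof is correct and follows essentially the same route as the paper. In the forward direction you conjugate the generators $N$ and $\lambda_g$ to get $u\in\mathcal N_M(N)$ and $\lambda(g)=u^*\sigma_g(u)\in\mathcal U(N)$; the paper obtains the same element as $u^*v_g^*u$ with $v_g=u\sigma_g(u^*)$. In the converse you check that conjugation keeps these generators inside $N\rtimes G$. You are in fact more careful than the paper on two points: you justify $(N\rtimes G)\cap M=N$ via $E_ME_{N\rtimes G}=E_N$, which the paper uses tacitly, and you verify both $u^{*}(N\rtimes G)u\subseteq N\rtimes G$ and $u(N\rtimes G)u^{*}\subseteq N\rtimes G$, whereas the paper checks only one inclusion in each direction.
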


\begin{prf}Suppose that $u\in \mathcal{N}_{M \rtimes G}(N\rtimes G)$. For any $n\in N$ and $g\in G$, we have $u(nu_g)u^*=un\sigma_g(u^*)u_g\in N\rtimes G$. Taking $g=\mathbf{1}_G$, it follows that $unu^*\in N\quad \forall n\in N$. Hence, $u\in \mathcal{N}_M(N).$ 

Moreover, for each $g\in G$, since $u_g \in \mathcal{U}(N\rtimes G)$, we obtain $u\sigma_g(u^*)u_g\in N\rtimes G$. Thus, there exists $v_g \in \mathcal{U}(N)$ such that $u  \sigma_g(u^*) = v_g.$

Define, $\lambda(g) := u^* v^*_g u $. Then $\lambda(g) \in \mathcal{U}(N)$, and a direct computation shows that $\lambda(g) = u^* \sigma_g(u).$
Furthermore, $$\lambda(gh) = u^* \sigma_{gh}(u) = u^* \sigma_g(\sigma_h(u)) 
= u^* \sigma_g(u \lambda(h)) 
= u^* \sigma_g(u) \, \sigma_g(\lambda(h)) 
= \lambda(g)\, \sigma_g(\lambda(h)).$$
Thus, $\lambda : G \to \mathcal{U}(N)$ is a $1$-cocycle, and $u$ is a relative eigenvector for the action with relative eigenvalue $\lambda$. \\
\noindent
Conversely, if $u$ is a  eigenvector for the action, then there exists a 1-cocycle $\lambda:G\to \mathcal{U}(N)$
such that $\sigma_g(u)=u\lambda(g),\forall g\in G.$
Hence,
\[
u^*xu
=
\sum_{g\in G} u^* n_g u\,\lambda(g)\,u_g .
\] Because $u\in \mathcal N_M(N)$, we have $u^*Nu=N,$ and therefore $u^* n_g u\,\lambda(g)\in N ~ \text{for all } g\in G.$ It follows that $u^*xu \in N\rtimes G.$ Thus, $u\in \mathcal N_{M\rtimes G}(N\rtimes G).$\qed
\end{prf}

\begin{dfn}\label{eigenbasis}
The $W^*$-dynamical extension system $(N\subseteq M,G,\sigma,\tau)$ is said to have the \textit{relative eigenbasis property} if there exists a unitary Pimsner--Popa basis for $M$ over $N$ consisting of relative eigenvectors of the action that belong to $\mathcal{N}_M(N)$.
\end{dfn}

\noindent
The following theorem characterizes the regularity of crossed product inclusions by establishing several equivalent conditions.

\begin{thm}\label{regularity of crossed product}
Let $(N\subseteq M,G,\sigma,\tau)$ be a $W^*$-dynamical extension $N'\cap(M\rtimes G)=\mathbb{C}$ and $N\subseteq M$ is a finite-index regular subfactor. Then the following conditions are equivalent \begin{enumerate}
    \item The inclusion $N\rtimes G \subseteq M\rtimes G$ is regular,
    \item The  $W^*$-dynamical extension system  $(N\subseteq  M,G,\sigma,\tau)$ has relative eigenbasis property,
    \item $\frac{\mathcal N_{M}(N)}{\mathcal U(N)}\times G~ \cong~ \frac{\mathcal N_{M\rtimes G}(N)}{\mathcal U(N)}.$
\end{enumerate}
\end{thm}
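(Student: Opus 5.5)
I will read the crossed-product inclusions as a non-degenerate commuting square $(N\subseteq M,\,N\rtimes G\subseteq M\rtimes G)$, with $P=M$ and $Q=N\rtimes G$. The hypothesis $N'\cap(M\rtimes G)=\mathbb{C}$ is exactly the irreducibility needed to apply Theorem~\ref{structure of unitaries}. So every $w\in\mathcal{N}_{M\rtimes G}(N\rtimes G)$ factors as $w=uv$ with $u\in\mathcal{U}(M)$ and $v\in\mathcal{U}(N\rtimes G)$. Then $u=wv^*$ also normalizes $N\rtimes G$. By Theorem~\ref{Analytical Property}, $u\in\mathcal{N}_M(N)$ and $u$ is a relative eigenvector. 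Hence
$$\mathcal{N}_{M\rtimes G}(N\rtimes G)=\mathcal{E}\cdot\mathcal{U}(N\rtimes G),$$
where $\mathcal{E}$ is the group of relative eigenvectors lying in $\mathcal{N}_M(N)$. This description is the backbone of the proof. I will also use the index equality $[M\rtimes G:N\rtimes G]=[M:N]$, which follows from non-degeneracy.

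\textbf{(2)$\Rightarrow$(1).} A unitary Pimsner--Popa basis $\{\zeta_i\}$ of $M$ over $N$ made of elements of $\mathcal{E}$ is, by the non-degeneracy proposition, also a basis of $M\rtimes G$ over $N\rtimes G$. By Theorem~\ref{Analytical Property} it lies in $\mathcal{N}_{M\rtimes G}(N\rtimes G)$. The characterization of \cite{CKP} then gives regularity.

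\textbf{(1)$\Rightarrow$(2).} Regularity and \cite{CKP} give a unitary orthonormal basis $\{w_i\}\subseteq\mathcal{N}_{M\rtimes G}(N\rtimes G)$. Write $w_i=u_iv_i$ as above. Right multiplication by the unitary $v_i\in N\rtimes G$ preserves the basis property for $M\rtimes G$ over $N\rtimes G$, since $E_{N\rtimes G}(u_i^*u_j)=v_iE_{N\rtimes G}(w_i^*w_j)v_j^*=\delta_{ij}$ and $\sum u_ie_{N\rtimes G}u_i^*=1$. So $\{u_i\}\subseteq\mathcal{E}\subseteq M$ is a basis of $M\rtimes G$ over $N\rtimes G$. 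The commuting square gives $E_N(u_i^*u_j)=E_{N\rtimes G}(u_i^*u_j)=\delta_{ij}$. Counting, there are $[M:N]$ orthonormal elements of $M$ over $N$ in a finite-index irreducible regular setting, so $\{u_i\}$ is a Pimsner--Popa basis of $M$ over $N$ consisting of relative eigenvectors, which is (2).

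\textbf{(1)$\Leftrightarrow$(3).} I will interpret the right-hand side of (3) as the Weyl group $\mathcal{N}_{M\rtimes G}(N\rtimes G)/\mathcal{U}(N\rtimes G)$; the statement writes $N$, which I take as a typo. Using the factorization and Corollary~\ref{weylcrlre}, $W(N\rtimes G\subseteq M\rtimes G)$ embeds in $W(N\subseteq M)$ as $\mathcal{E}\,\mathcal{U}(N)/\mathcal{U}(N)$. Regularity of the upper inclusion holds exactly when this Weyl group has order $[M\rtimes G:N\rtimes G]=[M:N]=|W(N\subseteq M)|$ (\cite{BG1}), i.e.\ exactly when the embedding is onto. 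This gives (1)$\Leftrightarrow$(3) once (3) is read as an isomorphism of Weyl groups implemented by the natural map; the ``$\times G$'' must be matched against the $u_g$ contributions. Alternatively, if (3) genuinely concerns $\mathcal{N}_{M\rtimes G}(N)/\mathcal{U}(N)$, I would use the decomposition $\mathcal{N}_{M\rtimes G}(N)\ni w=\sum_g m_gu_g$ with irreducibility forcing each normalizer to be a single term $m\,u_g$. Then $[m\,u_g]\mapsto([m],g)$ gives the isomorphism, and showing it is onto needs eigenvectors for each $g$.

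I expect the main obstacle to be condition (3): pinning down its intended meaning and showing that the isomorphism, as opposed to a mere injection, is equivalent to regularity. I would first try to show that the natural injection from the Weyl group of the crossed-product inclusion is onto precisely under (1). I would use the cardinality equality above, since an abstract isomorphism of finite groups of equal order also forces equal orders and hence regularity.
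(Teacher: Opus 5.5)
Your argument for (1)$\Leftrightarrow$(2) is correct and is essentially the paper's. You factor normalizers of $N\rtimes G$ via Theorem~\ref{structure of unitaries}, drop the $\mathcal{U}(N\rtimes G)$ factor, and apply Theorem~\ref{Analytical Property}. Your orthogonal-projection count, used in place of the paper's identity $E_{N\rtimes G}|_M=E_N$, is fine.

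The gap is in (1)$\Leftrightarrow$(3). Your main reading takes the right-hand side to be $\mathcal{N}_{M\rtimes G}(N\rtimes G)/\mathcal{U}(N\rtimes G)$, and this is not a typo fix. There are no ``$u_g$ contributions'' in that quotient, because $u_g\in\mathcal{U}(N\rtimes G)$. By your own embedding, that group has order at most $[M:N]=|W|$, where $W:=\mathcal{N}_M(N)/\mathcal{U}(N)$. So under this reading (3) fails for every nontrivial $G$, even when (1) holds (e.g.\ when $G$ fixes a normalizing basis). What you actually prove is that (1) holds iff the natural embedding $W(N\rtimes G\subseteq M\rtimes G)\hookrightarrow W$ is onto. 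That is true, but it is a different statement from (3).

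The paper means $\Gamma:=\mathcal{N}_{M\rtimes G}(N)/\mathcal{U}(N)$, the Weyl group of the regular inclusion $N\subseteq M\rtimes G$. Its (3)$\Rightarrow$(1) identifies $M\rtimes G$ with $N\rtimes\Gamma$ and uses that ``$G$ is normal''.

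Under this reading your sketch puts the difficulty in the wrong place:
\begin{itemize}
\item Your single-term decomposition is right. It shows that $[mu_g]\mapsto([m],g)$ is a bijection $\Gamma\to W\times G$ with no hypothesis at all, so surjectivity needs no eigenvectors.
\item What can fail is multiplicativity. Since $[mu_g][m'u_h]=[m\sigma_g(m')u_{gh}]$, we get $\Gamma\cong W\rtimes G$ for the action $g\cdot[m]=[\sigma_g(m)]$.
\item The natural map is therefore an isomorphism iff this action is trivial. That holds iff every element of $\mathcal{N}_M(N)$ is a relative eigenvector, iff (2), because a unitary normalizing basis is a full set of coset representatives of $W$.
\item Trivial action is also equivalent to $\{[u_g]\}$ being normal in $\Gamma$.
\end{itemize}

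Your cardinality fallback for (3)$\Rightarrow$(1) gives nothing, since $|\Gamma|=|W|\,|G|$ always. Worse, an abstract isomorphism genuinely does not suffice. For a counterexample:
\begin{itemize}
\item Let $\beta$ be an outer action of $S_3\times\mathbb{Z}_2$ on $R$. Put $N=R$ and $M=R\rtimes_\beta S_3$, with implementing unitaries $v_w$.
\item Let the generator $g$ of $G=\mathbb{Z}_2$ act by $\sigma_g=\gamma\circ\mathrm{Ad}(v_s)$. Here $s$ is a transposition and $\gamma$ extends $\beta_{(e,1)}$ with $\gamma(v_w)=v_w$.
\item The hypotheses hold: each $\mathrm{Ad}(v_wu_h)|_N$ with $(w,h)\neq(e,e)$ is a nontrivial $\beta$-automorphism, hence outer, so $N'\cap(M\rtimes G)=\mathbb{C}$.
\item Then $\Gamma\cong S_3\rtimes_{\mathrm{Ad}(s)}\mathbb{Z}_2\cong S_3\times\mathbb{Z}_2$, since $[v_su_g]$ is central of order $2$.
\item But $\sigma_g(v_w)=v_{sws}$, so up to $\mathcal{U}(N)$ only $v_e,v_s$ are relative eigenvectors, and (1) fails.
\end{itemize}

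Hence (3) must be read as saying that the natural map sending $[u_g]\mapsto(1,g)$ is an isomorphism. This is what the paper's $\Phi([u],g)=[\sigma_g(u)u_g]$ implements, and what its normality argument tacitly uses. With that reading, (3)$\Leftrightarrow$(2) follows from the computation above.
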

\begin{proof} $1\implies 2.$
Suppose the inclusion $N\rtimes G \subseteq M\rtimes G$ is regular. Let $\{w_i\}_{i=1}^n \subseteq \mathcal N_{M\rtimes G}(N\rtimes G)$ be a left coset representatives of the Weyl group of inclusion $N\rtimes G \subseteq M\rtimes G$. By Theorem \ref{structure of unitaries}, for each $i$, we have $w_i=u_iv_i$, where $u_i\in \mathcal U(M)$ and $v_i\in \mathcal U (N\rtimes G)$.  Since $w_i(N\rtimes G)=u_i(N\rtimes G)$, it follows that $[w_i]=[u_i]$ in the Weyl group. Hence, replacing $w_i$ by $u_i$ we may choose $\{u_i\}_{i=1}^{n}$ as coset representatives which by  Theorem \ref{Analytical Property} are relative eigenvectors of the action and belongs to $\mathcal N_M(N)$.
Moreover, $\{u_i\}_{i=1}^n$ forms a Pimsner--Popa basis of $M\rtimes G$ over $N\rtimes G$. Hence, for every $x\in M\subseteq M\rtimes G$, we have $x=\sum_{i=1}^n E_{N\rtimes G}(xu_i^*)\,u_i .$ As $E_{N\rtimes G}E_M=E_N$, $x=\sum_{i=1}^n E_N(xu_i^*)\,u_i $, showing that $\{u_i\}_{i=1}^n$ is a unitary orthonormal basis of $M$ over $N$ consisting of eigenvectors lying in $\mathcal N_M(N)$. Thus, $(N\subseteq M,G,\sigma,\tau)$ has relative eigenbasis property.\\
$2\implies 1.$
If $(N\subseteq M,G,\sigma,\tau)$ has relative eigenbasis property, then they also form the basis of the inclusion $N\rtimes G\subseteq M\rtimes G$. By Theorem \ref{Analytical Property}, these elements yield normalizing basis of the inclusion $N\rtimes G \subseteq M\rtimes G$.
Hence, the inclusion is regular.\\ 
$1\implies 3.$
Define a map  
$$\Phi:\frac{\mathcal N_{M}(N)}{\mathcal U(N)}\times G
\longrightarrow
\frac{\mathcal N_{M\rtimes G}(N)}{\mathcal U(N)},  \quad   \Phi([u_i],g)=[\sigma_g(u_i)u_g]$$
where $\{u_i\}_{i}$ is a relative eigenbasis  and $g\in G$.

We claim that $\Phi$ is an injective group homomorphism. Let $u_1,u_2\in\mathcal N_M(N)$ and $g_1,g_2\in G$. Then \begin{align*}
\Phi(([u_k],g_k)([u_l],g_l))&=\Phi(([u_ku_l],g_kg_l))\\
                            &=[\sigma_{g_kg_l}(u_ku_l)u_{g_kg_l}]\\
                            &=[u_{g_k}u_{g_l}u_ku_l]\\
                            &=[u_{g_k}\sigma_{g_l}(u_k)u_{g_l}u_l]\\
                            &=[u_{g_k}u_k\lambda(g_l)][u_{g_l}u_l]\\
                            &=[u_{g_k}u_k][u_{g_l}u_l]=[\sigma_{g_k}(u_k)u_{g_k}][\sigma_{g_l}(u_l)u_{g_l}].
\end{align*}
If $\Phi([u],g)=[1]$, then $\sigma_g(u)u_g\in\mathcal{U}(N).$ Since $\sigma_g(u)\in\mathcal{U}(M)$, it follows that \\
$u_g=\sigma_g(u)^*(\sigma_g(u)u_g)\in M.$
Hence $u_g\in M$, which is possible only if $g=\mathbf{1}_G$. Consequently, $\Phi([u],\mathbf{1}_G)=[u]=[1],$ and therefore $u\in\mathcal{U}(N)$. Thus $[u]=[1]$, proving that $\Phi$ is injective.
As $|\frac{\mathcal N_{M}(N)}{\mathcal U(N)}\times G|
=
|\frac{\mathcal N_{M\rtimes G}(N)}{\mathcal U(N)}|,$
$\Phi$ is an isomorphism.\\
$3\implies 1.$ $N\subseteq M$ and $N\subseteq N\rtimes G$ regular implies $N\subseteq M\rtimes G$ regular (\cite{BG}[Theorem 3.5]), so the commuting square $(N\subseteq M,N\rtimes G\subseteq M\rtimes G)$ is isomorphic to 
$$
\begin{matrix}
N\rtimes G &\subseteq & N\rtimes (\frac{\mathcal N_{M}(N)}{\mathcal U(N)}\times G)\cr
\rotatebox{90}{$\subseteq$} &\ &\rotatebox{90}{$\subseteq$}\cr
N &\subseteq & M
\end{matrix}.$$ 
Since $G$ is a normal subgroup of $\frac{\mathcal N_{M}(N)}{\mathcal U(N)}\times G$, it follows that the inclusion $N\rtimes G\subseteq M\rtimes G$ is regular.
\end{proof}

\begin{rmrk}
The triviality of the relative commutant, $N'\cap (M\rtimes G)=\mathbb{C}$, is ensured, in particular, by the strong outerness of $G$.
\end{rmrk}

We now turn our attention to the case where the inclusion is singular. In Theorem 3.4 of \cite{Pac2}, Packer showed that the inclusion $L^{\infty}(Y)\rtimes G\subseteq L^{\infty}(X)\rtimes G$ is singular when the underlying action is relatively weakly mixing. Motivated by this result and employing techniques from \cite{JP}, we establish an analogous theorem in the setting of $II_1$ factors.

\begin{dfn}\cite{Popa2}
    The action $\sigma$ is weak mixing relative to $N$ if for any finite set $F\subseteq M\ominus N$ and any $\epsilon>0$ there exists $g\in G$ such that $\|E_N(y^*\sigma_g(x))\|_2\leq \epsilon$ for all $x,y\in F.$
\end{dfn}

\begin{thm}\label{singular}
Let $(N\subseteq M,G,\sigma,\tau)$ be a $W^*$-dynamical extension system satisfying the hypothesis of  Theorem \ref{regularity of crossed product}. If the action $\sigma$ of $G$ on $M$ is weakly mixing relative to $N$, then the inclusion $N\rtimes G \subseteq M\rtimes G$
is singular.
\end{thm}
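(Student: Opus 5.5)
Singular means $\mathcal{N}_{M\rtimes G}(N\rtimes G)\subseteq N\rtimes G$; equivalently, the Weyl group of $N\rtimes G\subseteq M\rtimes G$ is trivial. The plan is to reduce an arbitrary normalizer to a unitary of $M$ via Theorem \ref{structure of unitaries}, and then exclude such unitaries outside $N$ using Theorem \ref{Analytical Property} and weak mixing.

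\textbf{Step 1 (reduction to $M$).} Let $w\in\mathcal{N}_{M\rtimes G}(N\rtimes G)$. The square $(N\subseteq M,\,N\rtimes G\subseteq M\rtimes G)$ is a non-degenerate commuting square. The irreducibility hypothesis $N'\cap(M\rtimes G)=\mathbb{C}$ gives in particular $N'\cap M=\mathbb{C}$, and $N\subseteq M$ is finite-index regular, so this square satisfies the hypotheses of Theorem \ref{structure of unitaries}. That theorem gives $w=uv$ with $u\in\mathcal{U}(M)$ and $v\in\mathcal{U}(N\rtimes G)$. Hence $u=wv^*$ also normalizes $N\rtimes G$. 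It therefore suffices to show that every $u\in\mathcal{U}(M)\cap\mathcal{N}_{M\rtimes G}(N\rtimes G)$ lies in $N$; then $w=uv\in N\rtimes G$.

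\textbf{Step 2 (eigenvector equation).} By Theorem \ref{Analytical Property}, such a $u$ lies in $\mathcal{N}_M(N)$ and is a relative eigenvector: $\sigma_g(u)=u\lambda(g)$ with $\lambda(g)\in\mathcal{U}(N)$.

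\textbf{Step 3 (forcing $u\in N$).} Write $u=E_N(u)+x$ with $x\in M\ominus N$. Since $\sigma_g$ preserves $N$ and commutes with $E_N$, we get $\sigma_g(x)=x\lambda(g)$. Then
\[
E_N(x^*\sigma_g(x))=E_N(x^*x)\lambda(g),
\qquad\text{so}\qquad
\|E_N(x^*\sigma_g(x))\|_2=\|E_N(x^*x)\|_2
\]
for every $g\in G$, because $\lambda(g)$ is unitary. Apply relative weak mixing with $F=\{x\}$ and arbitrary $\epsilon>0$: for some $g$, the left side is at most $\epsilon$. Hence $E_N(x^*x)=0$. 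Faithfulness of $\tau$ gives $\tau(x^*x)=0$, so $x=0$ and $u\in N$.

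The only potentially delicate point is Step 1, namely checking that Theorem \ref{structure of unitaries} applies to the crossed-product square. For that square the upper-left corner is $Q=N\rtimes G$ and the lower-right corner is $P=M$. Non-degeneracy holds because a Pimsner--Popa basis of $M$ over $N$ is also a basis of $M\rtimes G$ over $N\rtimes G$, and the commuting square relation $E_{N\rtimes G}E_M=E_N$ is standard. After this, Steps 2 and 3 are direct computations.
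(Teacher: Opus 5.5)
Your proof is correct. Step 1 is the same reduction the paper relies on. Its proof simply takes a nontrivial coset representative $u\in\mathcal{U}(M)$ that is a relative eigenvector, which is what the $1\Rightarrow 2$ argument of Theorem~\ref{regularity of crossed product} extracts from Theorem~\ref{structure of unitaries}; you make this explicit. One small point: the irreducibility that Theorem~\ref{structure of unitaries} uses for this square is $N'\cap(M\rtimes G)=\mathbb{C}$ itself, since $M\rtimes G$ is the big corner. Your remark that it implies $N'\cap M=\mathbb{C}$ is therefore beside the point, though harmless.

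The genuine difference is Step 3. The paper proceeds as follows:
\begin{enumerate}
\item it uses irreducibility to get $E_N(u)=0$;
\item it expands $\sigma_g(u)=u\lambda(g)=\sum_i\zeta_iE_N(\zeta_i^*u)\lambda(g)$ in a normalizing unitary basis $\{\zeta_i\}$ of $M$ over $N$;
\item it picks $g$ by weak mixing so that every $\|E_N(\zeta_i^*\sigma_g(u))\|_2\le\epsilon$;
\item it bounds $\|u\|_2^2$ by $n$ Cauchy--Schwarz terms, reaching $\|u\|_2^2\le 2nK\epsilon$.
\end{enumerate}
You instead split off $E_N(u)$ and use $\sigma_g(x)=x\lambda(g)$ for $x=u-E_N(u)$. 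This shows that $\|E_N(x^*\sigma_g(x))\|_2=\|E_N(x^*x)\|_2$ does not depend on $g$, so weak mixing applied to the single element $x$ forces $x=0$.

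Your version is shorter. It needs neither the basis, nor $E_N(u)=0$, nor $u\in\mathcal{N}_M(N)$. It proves the cleaner fact that under relative weak mixing every relative eigenvector in $M$ lies in $N$, with finite index and regularity entering only through Step 1.

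The paper's basis expansion, for its part, exposes how strong the hypothesis is. Run the same estimate with a basis element $\zeta_k\in M\ominus N$ in place of $u$, applying weak mixing to $F=\{\zeta_i\}\setminus N$; the term with $\zeta_i\in N$ vanishes because $\sigma_g$ commutes with $E_N$. This gives $1=\|\sigma_g(\zeta_k)\|_2^2=\sum_i\|E_N(\zeta_i^*\sigma_g(\zeta_k))\|_2^2\le n\epsilon^2$. So for a finite-index inclusion, relative weak mixing can only hold when $M=N$, and the statement is in fact vacuous. Neither your Step 1 nor the paper's reduction to relative eigenvectors is really needed.
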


\begin{prf}
    It suffices to show that the Weyl group of the inclusion $N\rtimes G\subseteq M\rtimes G$ is trivial. Suppose, towards a contradiction, that it is non-trivial. Let $u\in \mathcal{U}(M)$ be a non-trivial left coset representative, which is also a relative eigenvector as in Theorem \ref{Analytical Property}. So,
    $$\sigma_g(u)=u\lambda(g)=\sum_{j=1}^n\zeta_jE_N(\zeta_j^*u)\lambda(g)$$ where $\{\zeta_i\}_{i=1}^n$ is a unitary O.N.B. of $M$ over $N$ in $\mathcal{N}_M(N)$.\\
    Fix $\epsilon>0$. Since the action is weakly mixing relative to $N$, $\exists ~g\in G$ such that\\ $||E_N(\zeta_i^*\sigma_g(u))||_2\leq \epsilon$ as $\zeta_i,u\in M\ominus N$ (see \cite{BG1} Lemma 2.12).
     Then \begin{align*}
        \| u\|_2^2&\leq \|\sigma_g(u)\|_2^2+\|\sum_{i=1}^n\zeta_iE_N(\zeta_i^*u)\lambda(g)\|_2^2 \\
        &=\|\sigma_g(u)-\sum_{i=1}^n\zeta_iE_N(\zeta_i^*u)\lambda(g)\|_2^2+2Re\sum_{i=1}^n|\tau(\lambda(g)^*E_N(\zeta_i^*u)^*\zeta_i^*\sigma_g(u))|
    \end{align*} By Holder's inequality \cite{Hi2}, we have
    \begin{align*}
      |\tau(\lambda(g)^*E_N(\zeta_i^*u)^*\zeta_i^*\sigma_g(u))|&=|\tau(\lambda(g)^*E_N(\zeta_i^*u)^*E_N(\zeta_i^*\sigma_g(u)))|\\&\leq \|\lambda(g)^*E_N(\zeta_i^*u)^*\|_2\|E_N(\zeta_i^*\sigma_g(u))\|_2\leq K\epsilon.  
     \end{align*}
    where  $K=\text{max}_{1\leq i\leq n}{\|\lambda(g)^*E_N(\zeta_i^*u)^*\|_2}$.
    Therefore, $\|u\|_2^2 \leq 2nK\varepsilon.$
Since $\varepsilon>0$ was arbitrary, it follows that $\|u\|_2=0$, contradicting the assumption that $u$ is nontrivial.
Thus, the Weyl group is trivial, and consequently the inclusion $N\rtimes G \subseteq M\rtimes G$ is singular. \qed
\end{prf}\\

As in the previous section, we now investigate the relationship between the lattice structures of $\mathcal{L}(N\subseteq M)$ and its crossed-product counterpart $\mathcal{L}(N\rtimes G\subseteq M\rtimes G)$. Let $G$ be a finite group equipped with an outer action on $M$ that restricts to an outer action on $N$. Although the passage to crossed products does not generally  preserve the intermediate subfactor lattice, we show that a strongly outer action  rigidifies the inclusion, yielding a natural correspondence between the two lattices.

\begin{thm}\label{strongly outer}
Suppose that $G$ acts strongly outer on the finite-index irreducible inclusion
$N\subseteq M$. Then $\mathcal{L}(N\rtimes G\subseteq M\rtimes G)
=
\mathcal{L}(N\subseteq M).$
\end{thm}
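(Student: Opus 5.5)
The plan is to build mutually inverse lattice maps $K\mapsto K\rtimes G$ from $\mathcal{L}(N\subseteq M)$ to $\mathcal{L}(N\rtimes G\subseteq M\rtimes G)$ and $L\mapsto L\cap M$ back. First, every intermediate subfactor $N\subseteq K\subseteq M$ is $G$-invariant. Irreducibility forces $\mathcal{L}(N\subseteq M)$ to be finite, and each $K$ is determined by its Jones projection $e_K\in N'\cap M_1$. Because $\sigma_g$ commutes with $E^M_N$, it extends to the basic construction tower and fixes $e_N$. Hence it acts on $N'\cap M_1$ by automorphisms that permute the biprojections. Strong outerness should force $\sigma_g$ to act trivially on the relative commutants; this is the standard consequence used in \cite{P,CK}, that the standard invariant of $N\rtimes G\subseteq M\rtimes G$ agrees with that of $N\subseteq M$. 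Thus $\sigma_g(e_K)=e_K$, so $\sigma_g(K)=K$ and $K\rtimes G$ is well defined. It lies in $\mathcal{L}(N\rtimes G\subseteq M\rtimes G)$, and $(K\rtimes G)\cap M=K$ because $E_M(\sum k_g u_g)=k_{\mathbf 1}$.

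The key step is to show that every intermediate $L$ with $N\rtimes G\subseteq L\subseteq M\rtimes G$ equals $(L\cap M)\rtimes G$. Since $u_g\in L$ for all $g$, any $x=\sum_g x_g u_g\in L$ (with $x_g\in M$) gives $xu_g^*\in L$. So it suffices to show that $E_M(L)\subseteq L$, i.e.\ that the square $(L\cap M\subseteq L, M\subseteq M\rtimes G)$ commutes. I would argue through the relative commutants. By a Popa--Choda--Kosaki type argument, strong outerness gives $(N\rtimes G)'\cap (M\rtimes G)_1\cong N'\cap M_1$, with the identification sending $p\in N'\cap M_1$ (which is $G$-fixed) to the same operator viewed in the larger basic construction. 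In particular, every biprojection of the crossed-product inclusion comes from a biprojection of $N'\cap M_1$.

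Concretely, I would take the Jones projection $e_L\in (N\rtimes G)'\cap (M\rtimes G)_1$ and expand it with respect to $\{u_g\}$ and the basic construction $\langle M\rtimes G, e_{N\rtimes G}\rangle$. The coefficient of $u_g$ for $g\neq \mathbf 1$ intertwines $x$ with $\sigma_g(x)$ for $x\in N$. Strong outerness, applied at levels $k=0,1$ (and $k=-1$ for elements coming from the tunnel), forces these coefficients to vanish. So $e_L$ lies in $N'\cap M_1$ and equals $e_{L\cap M}$, which gives $L=(L\cap M)\rtimes G$.

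The main obstacle is this last step: correctly identifying $(M\rtimes G)_1$ with $M_1\rtimes G$, so that the Fourier expansion makes sense and the coefficients lie in some $M_k$ where the definition of strong outerness applies. If that is awkward, the fallback is an index argument. Both maps are injective and inclusion-preserving, $[K\rtimes G:N\rtimes G]=[K:N]$ by non-degeneracy of the square $(N\subseteq K,N\rtimes G\subseteq K\rtimes G)$, and the equality of the relative commutant algebras forces the numbers of biprojections to coincide. The two finite lattices then have the same cardinality, so $K\mapsto K\rtimes G$ is a bijection.
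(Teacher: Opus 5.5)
\textbf{The gap is in your first step and cannot be repaired.} You claim that strong outerness forces each $\sigma_g$ to act trivially on $N'\cap M_k$. This is false, and it is not what Popa or Choda--Kosaki prove. Their result, the one the paper quotes, is $(N\rtimes G)'\cap(M_k\rtimes G)=(N'\cap M_k)^{\sigma}$. That is a \emph{fixed-point} algebra, and a strongly outer action may permute biprojections.

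Here is a counterexample.
\begin{itemize}
\item Write $D_4=H\rtimes G$ with $H=\{1,a,b,ab\}\cong\mathbb{Z}_2\times\mathbb{Z}_2$, $G=\{1,s\}$ and $sas=b$.
\item Let $\alpha$ be an outer action of $D_4$ on the hyperfinite II$_1$ factor $R$. Put $N=R\subseteq M=R\rtimes_\alpha H$ and $\sigma_s=\mathrm{Ad}(u_s)|_M$.
\item As an $N$--$N$ bimodule, every $L^2(M_k)$ is a direct sum of copies of $L^2(R)$ with right action twisted by $\alpha_h$, $h\in H$.
\item For every $h\in H$, $\alpha_s\alpha_h^{-1}=\alpha_{sh^{-1}}$ is outer. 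Hence no nonzero $a\in M_k$ satisfies $ax=\sigma_s(x)a$ for all $x\in N$, so the action is strongly outer in exactly the paper's sense.
\item Yet $\sigma_s$ interchanges $R\rtimes\langle a\rangle$ and $R\rtimes\langle b\rangle$.
\end{itemize}
In this example $N\rtimes G\subseteq M\rtimes G$ is $R\rtimes\langle s\rangle\subseteq R\rtimes D_4$. Its lattice $\{\langle s\rangle,\langle s,ab\rangle,D_4\}$ has three elements, while $\mathcal{L}(N\subseteq M)$ has five.

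So $K\mapsto K\rtimes G$ is not defined on all of $\mathcal{L}(N\subseteq M)$. Your fallback count fails for the same reason: $(N'\cap M_1)^G$ is $3$-dimensional here, while $N'\cap M_1\cong\ell^\infty(H)$ is $4$-dimensional.

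\textbf{The rest of your argument is sound and follows the paper's route.} Killing the $u_g$-coefficients of $e_L$ by strong outerness at level $k=1$ is a hands-on version of Choda--Kosaki's Proposition 7. From $e_L\in N'\cap M_1$ you get $E_ME_L=E_LE_M$, hence $L=(L\cap M)\rtimes G$; the paper gets this from the commuting-square criterion it cites. Since $u_g\in L$ and $\sigma_g(M)=M$, the subfactor $L\cap M$ is automatically $G$-invariant.

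What this actually proves is a bijection between $\mathcal{L}(N\rtimes G\subseteq M\rtimes G)$ and the $G$-invariant part $\mathcal{L}(N\subseteq M)^G$. The paper's converse direction (``if $P\in\mathcal{L}(N\subseteq M)$ then $P\rtimes G$ is intermediate'') silently assumes $\sigma_g(P)=P$ as well. Equality with the full lattice therefore needs an extra hypothesis, for example that $G$ fixes the biprojections of $N'\cap M_1$ (in particular, that it acts trivially on $N'\cap M_1$). Under that hypothesis $\sigma_g(e_K)=e_K$ gives $\sigma_g(K)=K$, and your proof goes through.
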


\begin{prf}
Put $\widetilde{N}=N\rtimes G$ and $\widetilde{M}=M\rtimes G$, and let
$\widetilde{M}_k=M_k\rtimes G$, where $M_k$ is the $k$-th algebra in the
basic construction tower for $N\subseteq M$. Then, by \cite{CK}, the basic
construction tower for $\widetilde{N}\subseteq\widetilde{M}$ is
\[
\widetilde{N}\subseteq\widetilde{M}\subseteq\widetilde{M}_1
\subseteq\cdots\subseteq\widetilde{M}_k\subseteq\cdots.
\]

Since $N\subseteq M$ is irreducible and of finite index,
$\mathcal{L}(N\subseteq M)$ is finite. Moreover, since $\sigma$ is a
strongly outer action of $G$ on $N\subseteq M$, it follows that $N\subseteq\widetilde{M}$ is irreducible. In particular,
$\widetilde{N}\subseteq\widetilde{M}$ is irreducible, and hence
$\mathcal{L}(\widetilde{N}\subseteq\widetilde{M})$ is finite.

Let $\widetilde{P}$ be an intermediate subfactor of
$\widetilde{N}\subseteq\widetilde{M}$, and put $P=\widetilde{P}\cap M.$
Then $P$ is an intermediate subfactor of $N\subseteq M$. Since
$\widetilde{N}=N\rtimes G\subseteq\widetilde{P}$, the canonical unitaries
$u_g$, $g\in G$, belong to $\widetilde{P}$. For $x\in P$ and $g\in G$, we have $\sigma_g(x)=u_gxu_g^*\in\widetilde{P}.$

Since $\sigma_g$ preserves $M$, we obtain $\sigma_g(x)\in\widetilde{P}\cap M=P.$
Thus, $\sigma$ restricts to an action of $G$ on $P$. By \cite{CK}[Proposition 7] (see also \cite{P}), we have  $(\widetilde{N})'\cap\widetilde{M}_k
=
(N'\cap M_k)^\sigma.$ In particular,
\[
e_{\widetilde{P}}^{\widetilde{M}}
\in
(\widetilde{N})'\cap\widetilde{M}_1
\subseteq
N'\cap M_1.
\]
By \cite{BL}[Corollary 3.3],
\[
N'\cap M_1
=
e_M^{\widetilde{M}}
\left(
N'\cap
\left\langle
\widetilde{M},e_M^{\widetilde{M}}
\right\rangle
\right)
e_M^{\widetilde{M}}.
\]
Therefore, $e_{\widetilde{P}}^{\widetilde{M}}e_M^{\widetilde{M}}
=
e_M^{\widetilde{M}}e_{\widetilde{P}}^{\widetilde{M}}.$ Consequently, $E_{\widetilde{P}}^{\widetilde{M}}E_M^{\widetilde{M}}
=
E_M^{\widetilde{M}}E_{\widetilde{P}}^{\widetilde{M}}.$
By \cite{CD}[Theorem 3.2], it follows that $\widetilde{P}=P\rtimes G.$
Hence every intermediate subfactor of
$\widetilde{N}\subseteq\widetilde{M}$ is of the form
$P\rtimes G$ for some $P\in\mathcal{L}(N\subseteq M)$. Conversely,
if $P$ is an intermediate subfactor of $N\subseteq M$, then
$P\rtimes G$ is an intermediate subfactor of
$N\rtimes G\subseteq M\rtimes G$. Therefore, $\mathcal{L}(N\rtimes G\subseteq M\rtimes G)
=
\mathcal{L}(N\subseteq M).$ \qed
\end{prf}

\begin{ppsn}
Suppose that $K$ is a finite group acting strongly outer on an
irreducible regular inclusion of $II_1$-factors $N\subseteq M$. Then
\[
\mathcal{L}(N\subseteq M\rtimes K)
=
\mathcal{L}(H\rtimes K),
\]
where $H$ denotes the Weyl group of the inclusion $N\subseteq M$.
\end{ppsn}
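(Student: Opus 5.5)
The plan is to reduce the statement to the Galois correspondence for irreducible regular inclusions \cite{BG1}. I would show that $N\subseteq M\rtimes K$ is an irreducible, finite-index, regular inclusion whose Weyl group is isomorphic to a semidirect product $H\rtimes K$. Then $\mathcal{L}(N\subseteq M\rtimes K)$ is isomorphic to the subgroup lattice of $H\rtimes K$, which is how I read $\mathcal{L}(H\rtimes K)$.

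\textbf{Irreducibility and finite index.} Since $K$ acts strongly outerly on $N\subseteq M$, the remark following Theorem~\ref{regularity of crossed product} gives $N'\cap(M\rtimes K)=\mathbb{C}$. The index is $[M\rtimes K:N]=[M:N]\,|K|<\infty$, because $K$ is finite.

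\textbf{Regularity.} Both $N\subseteq M$ and $N\subseteq N\rtimes K$ are regular; the latter because the $u_k$ normalize $N$ and generate $N\rtimes K$ together with $N$. Hence \cite{BG}[Theorem 3.5], exactly as in the proof of $3\implies 1$ in Theorem~\ref{regularity of crossed product}, gives that $N\subseteq M\rtimes K$ is regular. Alternatively, this can be seen directly: $\mathcal{N}_M(N)\cup\{u_k\}$ generates $M\rtimes K$ and consists of normalizers of $N$.

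\textbf{Identifying the Weyl group.} Let $W$ denote the Weyl group of $N\subseteq M\rtimes K$.

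1. The action $K\curvearrowright H$: since $\sigma_k(N)=N$ and $\sigma_k(M)=M$, each $\sigma_k$ maps $\mathcal{N}_M(N)$ onto itself and $\mathcal{U}(N)$ onto itself. So $\sigma_k$ descends to an automorphism of $H=\mathcal{N}_M(N)/\mathcal{U}(N)$, giving an action $K\to\operatorname{Aut}(H)$ and hence the group $H\rtimes K$.

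2. The map: define $\Phi:H\rtimes K\to W$ by $\Phi(([v],k))=[v\,u_k]$.
   \begin{itemize}
   \item It is well defined: changing $v$ by an element of $\mathcal{U}(N)$ does not change the class of $v u_k$ in $W$.
   \item It is a homomorphism, using $u_k v u_k^*=\sigma_k(v)$. The computation is $(v u_k)(v' u_{k'})=v\,\sigma_k(v')\,u_{kk'}$.
   \item It is injective, as in the proof of Theorem~\ref{regularity of crossed product}. If $v u_k\in\mathcal{U}(N)$, then $u_k\in M$, which forces $k=1$ by freeness of the crossed product, and then $v\in\mathcal{U}(N)$.
   \end{itemize}

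3. Surjectivity by counting: for an irreducible finite-index regular inclusion, $|W|=[M\rtimes K:N]$ \cite{BG1}[Theorem 3.12]. The same result applied to $N\subseteq M$ gives $|H|=[M:N]$. Hence $|W|=[M:N]\,|K|=|H\rtimes K|$, and the injective map $\Phi$ is an isomorphism.

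\textbf{Conclusion.} Applying the Galois correspondence \cite{BG1} to the irreducible regular inclusion $N\subseteq M\rtimes K$ identifies $\mathcal{L}(N\subseteq M\rtimes K)$ with the subgroup lattice of $W\cong H\rtimes K$, which is the claim.

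\textbf{Main obstacle.} I expect the delicate step to be the careful verification of the regularity and counting inputs. In particular, I must make sure the cited index-equals-Weyl-group-order result requires only irreducibility, which strong outerness supplies. I also need the multiplication in $W$ to match the semidirect product, rather than a product twisted by a cocycle. Any 2-cocycle coming from the choice of representatives $v$ is absorbed into $\mathcal{U}(N)$, so it should be harmless at the level of classes.
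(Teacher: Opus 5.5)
Your proof is correct, but it takes a different route from the paper's.

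\textbf{The paper's route.} The paper never writes down the isomorphism explicitly. It uses \cite[Theorem 3.5]{BG} to replace the commuting square $(N\subseteq M,\, N\rtimes K\subseteq M\rtimes K)$ by the group model $(N\subseteq N\rtimes H,\, N\rtimes K\subseteq N\rtimes G)$, where $G$ is the Weyl group of $N\subseteq M\rtimes K$. It reads off $G=HK$ and $H\cap K=\{e\}$ from non-degeneracy and the commuting-square condition. It then gets $H\trianglelefteq G$ from regularity of $M\subseteq M\rtimes K$ via \cite[Corollary 3.5]{BCP}. So $G$ appears as an internal semidirect product.

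\textbf{Your route.} You adapt the map from the proof of $1\implies 3$ in Theorem~\ref{regularity of crossed product}, keeping the twist by $\sigma$ since there is no eigenbasis hypothesis. Injectivity comes from $u_k\notin M$ for $k\neq 1$. Surjectivity comes from the count $|W|=[M:N]\,|K|=|H|\,|K|$ via \cite[Theorem 3.12]{BG1}.

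\textbf{What each buys.} Your argument is more self-contained. It needs only the index/order formula and the Galois correspondence of \cite{BG1}. It bypasses both the structure theorem for commuting squares of regular inclusions and the normality criterion of \cite{BCP}; normality of $H$ in $W$ is automatic, since $H$ is the image of a normal subgroup of $H\rtimes K$. It also identifies the action of $K$ on $H$ explicitly as the one induced by $\sigma$, which the paper leaves implicit as conjugation inside $G$. The paper's route, in exchange, shows that the whole commuting square is group-theoretic, which fits the commuting-square theme of the article.

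Your worry about a twisting $2$-cocycle is indeed harmless. Because $\sigma$ is a genuine action, $u_ku_{k'}=u_{kk'}$. All your computations take place modulo $\mathcal{U}(N)$, which is normal in $\mathcal{N}_{M\rtimes K}(N)$.
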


\begin{prf}
Since $K$ acts strongly outer, $(N\subseteq M,N\rtimes K\subseteq M\rtimes K)$ is an irreducible commuting square. As $N\subseteq M$ and
$N\subseteq N\rtimes K$ are regular inclusions, \cite{BG}[Theorem 3.5] implies that $\left(
N\subseteq M,\,
N\rtimes K\subseteq M\rtimes K
\right)
\cong
\left(
N\subseteq N\rtimes H,\,
N\rtimes K\subseteq N\rtimes G
\right),$ where $G$ is the Weyl group of the inclusion
$N\subseteq M\rtimes K$. Since $\left(
N\subseteq N\rtimes H,\,
N\rtimes K\subseteq N\rtimes G
\right)$ is a non-degenerate commuting square, we have $G=HK$ and $H\cap K=\{e\}.$ 

Moreover, $M\subseteq M\rtimes K$ is regular, and $N\rtimes H\subseteq N\rtimes G
\cong
M\subseteq M\rtimes K.$
Hence $N\rtimes H\subseteq N\rtimes G$ is regular, which implies that
$H$ is a normal subgroup of $G$   (see \cite{BCP}[Corollary 3.5]).

Therefore, as $H\trianglelefteq G$, $G=HK$, and
$H\cap K=\{e\}$, we obtain $G=H\rtimes K.$ Consequently, $\mathcal{L}(N\subseteq M\rtimes K)
=
\mathcal{L}(G)
=
\mathcal{L}(H\rtimes K),$ as required. \qed
\end{prf}
\medskip

\vspace{0.5cm}
\noindent \textbf{Concluding Remarks.}(Non-irreducible case) A fundamental assumption underpinning the main results of this paper--particularly the normalizer factorization and the Weyl group embedding--is the strict irreducibility of the inclusions. The behavior of these invariants in commuting squares lacking this property is currently unknown; exploring this broader, reducible regime will be the subject of future investigations.

\subsection*{Acknowledgement} 
The first author acknowledges the support of the grant ANRF/ECRG/2024/002328/PMS.

\bigskip

\noindent {\em Department of Mathematics}\\
{\em Indian Institute of Technology Kanpur}\\
{\em Uttar Pradesh $208016$, India}
\medskip

\noindent {Keshab Chandra Bakshi:} bakshi209@gmail.com, keshab@iitk.ac.in\\
{C. Silambarasan:} silamcmath23@gmail.com, silamc23@iitk.ac.in

\end{document}